\newtheorem{remark}{Remark}[section]
\newtheorem{theorem}{Theorem}[section]
\newtheorem{lemma}{Lemma}[section]
\newtheorem{corollary}{Corollary}[section]
\newtheorem{proposition}{Proposition}[section]
\newtheorem*{mylem3.1}{Lemma 3.1*}
\theoremstyle{remark}
\numberwithin{equation}{section}
\begin{document}

\title{On Concentration Inequality of the Laplacian Matrix of Erd\H{o}s--R\'enyi
 Graphs}

\pagestyle{fancy}

\fancyhf {} 


\fancyhead[CO]{\footnotesize  The Concentration Property of the Laplacian Matrix of Erdős-Rényi Random Graphs}

\fancyhead [CE]{\footnotesize Y. Chen, X. Hu, P. Li}

\fancyhead [LE]{\thepage}
\fancyhead [RO]{\thepage}
\renewcommand{\headrulewidth}{0mm}

\author{Yiming Chen}
\address{Shandong University,  Jinan,  250100, China.}
\email{chenyiming960212@mail.sdu.edu.cn}

\author{Xuanang Hu$^*$}
\address{Shandong University,  Jinan,  250100, China.}
\email{xuananghu01@mail.sdu.edu.cn}

\author{Pengtao Li}
\address{University of Southern California, Los Angeles, CA, 90089, USA}
\email{pengtaol@usc.edu}
\date{}

\keywords{Random Graphs; Laplacian Matrix Spectral Norm; Concentration Inequalities.}

\begin{abstract}	
This paper focuses on the concentration properties of the spectral norm of the normalized Laplacian matrix for Erdős–Rényi random graphs. First, We achieve the optimal bound that can be attained in the further question posed by Le et al. \cite{LLV17RS} for the regularized Laplacian matrix. Beyond that, we also establish a uniform concentration inequality for the spectral norm of the Laplacian matrix in the homogeneous case, relying on a key tool: the uniform concentration property of degrees, which may be of independent interest. Additionally, we prove that after normalizing the eigenvector corresponding to the largest eigenvalue,  the spectral norm of the Laplacian matrix concentrates around 1, which may be useful in special cases.
\end{abstract}

\maketitle


\section{introduction}
The Erdős–Rényi graph model is a foundational concept in the study of random graphs, introduced by Erdős and Rényi, in their pioneering work \cite{er1960}. It provides a mathematical framework for understanding the structure and properties of networks formed through random processes. Given a set of vertices $V = \{v^{(1)}, v^{(2)}, \dots, v^{(n)}\} $, an (homogeneous) Erdős–Rényi graph \( G(n, p) \) defined on \( V \) is such that each of the \( \binom{n}{2} \) possible edges between pairs of vertices is included independently with probability \( p \), where \( 0 \leq p \leq 1 \). If the edges between $v^{(i)}$ and $v^{(j)}$ are formed independently with given probabilities $p_{i j} \in [0,1]$, then it becames an inhomogeneous Erdös-Rényi model \( G(n, p_{i j}) \).

This paper focuses on the concentration properties of Erdős–Rényi graphs, a topic that has received considerable interest, and it is one of the earliest applications of matrix concentration inequalities \cite{tropp15boo}. These properties are commonly gauged through the spectral norm $\|\cdot\|$ of the classic matrices associated with random graphs, particularly the adjacency matrix $A$ and the (symmetric, normalized) Laplacian matrix $\mathcal{L}(A)$. Here, \( A \) is a symmetric matrix whose entries are
\[
A_{ij} = 
\begin{cases} 
0 & \text{if } i = j, \\
\mathbbm{1}_{U_{i j} < p_{i j}} & \text{if } 1 \leq i < j \leq n, \\
\mathbbm{1}_{U_{j i} < p_{j i}} & \text{if } 1 \leq j < i \leq n,
\end{cases}
\]
where \( \left(U_{i j}\right)_{1 \leq i < j \leq n} \) are independent random variables, uniformly distributed on \([0,1]\) and \( \mathbbm{1} \) stands for the indicator function. The Laplacian matrix $\mathcal{L}(A)=I - D^{-1/2} A D^{-1/2}$, where \( D \) is a diagonal matrix, with the \( i \)-th diagonal entry \( D_{ii}=\sum^n_{j=1}A_{i j} \) representing the degree of vertex \( v^{(i)} \). Specifically, for homogeneous case, the adjacent matrix is denoted by $A_p$, to distinguish the difference. 

Regarding the adjacency matrix $A$, due to the equivalence between the spectral norm and the largest eigenvalue of random matrices, it can be included in the study of the spectral properties of adjacency matrices. For instance, a general concentration inequality established by Alon, et al. \cite{akv02} for the largest eigenvalue of symmetric random matrices with bounded independent entries directly implies the concentration of $A$. It can be shown that for any \( n \geq 1 \), \( p \in [0, 1] \), and \( r > 0 \),
\[
\mathbb{P}\left\{\left|\left\|A\right\| - \mathbb{E}\left\|A\right\|\right| > r\right\} \leq C_1 e^{-C_2 r^{2}},	
\]
where $C_1$, $C_2$ are the universal constants. For more results on spectral properties, refer to \cite{bbk19}, \cite{ekyy13}, \cite{jl18}, \cite{ks03}, \cite{VV05}. The work of \cite{ccmnb}, \cite{LLV17RS}, \cite{LM20BER}, \cite{O09AIX} further explored the concentration properties of adjacency matrices.

As noted by Emmanuel et al.\cite{EJWZH19AOS}, the theoretical study of graph Laplacian matrices is an important research direction. The deviation of Laplacian matrix is influenced by two terms: the deviation of the adjacency matrix and the deviation of the degrees. The influence of the degree matrix renders the general methods for independent symmetric matrices unsuitable, see, e.g., \cite{chct12jmlr}, \cite{LLV17RS}, \cite{O09AIX}.  For an introduction to graph Laplacians and their applications in random walks on graphs, graph signal processing, graph neural networks, and circuit networks, the readers are refer to \cite{cm11}, \cite{CHY10WINE} and \cite{DLP11AM}. Shown in \cite{crv15}, \cite{dl21}, \cite{L14},  one important application of the concentration properties of the Laplacian matrix is the community detection problem in the Stochastic Block Model (SBM), which is a widely-used model for graphs with community structure. The spectral properties play a key role in identifying these communities, as their eigenvalues and eigenvectors capture the underlying block structure of the graph. 

When the Erdős–Rényi graph is sparse ($d = \max_{i,j} n p_{i,j} \ll \log n $), the Laplacian matrix loses its concentration \cite{LLV17RS}. In such cases, artificial concentration can be achieved by adding a regularization term \( \tau \) to the degrees of all vertices \cite{JY16AOS}, \cite{QR13NIPS}. As pointed out in \cite{LLV17RS},  even in the case of an inhomogeneous Erdős–Rényi graph,  one can choose \( \tau > 0 \) and add the same value \( \tau/n \) to all entries of the adjacency matrix \( A \), thereby replacing it with 
\begin{equation}\label{spefapt}
A_{\tau} := A + \frac{\tau}{n} \bar{1} \bar{1}^{\top}.
\end{equation}
Here, the all-ones vector $\bar{1}$ is a column vector in which every component is equal to 1. The regularized Laplacian \( \mathcal{L}(A_{ \tau}) \) then regains its concentration properties as the following theorem indicates.  The reader can refer to figure \ref{fig:all_images} for illustration.
\begin{theorem}[Le et al. \cite{LLV17RS}]  
For an inhomogeneous Erdős-Rényi graph with \( d = \max_{i,j} n p_{i,j} \), let \( \tau > 0 \). For any \( r \geq 1 \), with probability at least \( 1 - e^{-r} \), it holds that
\begin{equation}\label{lv1.1t}
\|\mathcal{L}(A_{\tau}) - \mathcal{L}(\mathbb{E} A_{\tau})\| \leq \frac{Cr^2}{\sqrt{\tau}} \left(1 + \frac{d}{\tau}\right)^{5/2}.
\end{equation}
\end{theorem}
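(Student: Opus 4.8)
The plan is to reduce the Laplacian deviation to three more tractable pieces and then control each using the fact that the additive regularization forces every expected degree to exceed $\tau$. Write $\bar A := \mathbb{E} A_\tau$, let $d_i := \sum_{j}(A_\tau)_{ij}$ be the regularized degrees with $\bar d_i := \mathbb{E} d_i$, and let $D,\bar D$ be the associated diagonal matrices, so $\mathcal{L}(A_\tau)=I-D^{-1/2}A_\tau D^{-1/2}$ and likewise for $\mathcal{L}(\bar A)$. Two facts drive everything. First, since adding $\tau/n$ to every entry adds exactly $\tau$ to each row sum, one has $\bar d_i=\sum_{j}p_{ij}+\tau\in[\tau,\,d+\tau]$; in particular $\|\bar D^{-1/2}\|\le\tau^{-1/2}$ and $\|\bar A\|\le\max_i\sum_j p_{ij}+\tau\le d+\tau$. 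Second, the deterministic regularization cancels under centering, $A_\tau-\bar A=A-\mathbb{E}A$. I would then use the telescoping identity
\begin{align*}
D^{-1/2}A_\tau D^{-1/2}-\bar D^{-1/2}\bar A\,\bar D^{-1/2}
&=(D^{-1/2}-\bar D^{-1/2})A_\tau D^{-1/2} \\
&\quad+\bar D^{-1/2}A_\tau(D^{-1/2}-\bar D^{-1/2}) \\
&\quad+\bar D^{-1/2}(A-\mathbb{E}A)\bar D^{-1/2},
\end{align*}
so that, since the left side equals $\mathcal{L}(\bar A)-\mathcal{L}(A_\tau)$ up to sign, it suffices to bound the two (mutually transposed) \emph{degree-mismatch} terms and the single \emph{centered-adjacency} term.

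For the degree-mismatch terms the governing quantity is the relative deviation $\eta:=\max_i|d_i/\bar d_i-1|$. Because $d_i=\sum_{j\ne i}A_{ij}+\tau$ is, up to the shift $\tau$, a sum of independent Bernoulli variables of total mean $\bar d_i-\tau\le d$, Bernstein's inequality gives $|d_i-\bar d_i|\lesssim\sqrt{\bar d_i\,s}+s$ with probability $1-2e^{-s}$; dividing by $\bar d_i\ge\tau$ yields $|d_i/\bar d_i-1|\lesssim\sqrt{s/\tau}+s/\tau$. On the event $\eta\le 1/2$, the elementary estimate $|x^{-1/2}-1|\le c|x-1|$ near $x=1$ gives $\|D^{-1/2}-\bar D^{-1/2}\|=\max_i|d_i^{-1/2}-\bar d_i^{-1/2}|\lesssim\eta\,\tau^{-1/2}$ and $\|D^{-1/2}\|\lesssim\tau^{-1/2}$. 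Splitting $A_\tau=\bar A+(A-\mathbb{E}A)$ inside the mismatch terms, the $\bar A$ part is bounded by $\eta\,\tau^{-1/2}\|\bar A\|\tau^{-1/2}\lesssim\eta(1+d/\tau)$, while the $A-\mathbb{E}A$ part is a higher-order contribution dominated by $\eta$ times the centered-adjacency bound below.

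The heart of the argument, and the step I expect to be the main obstacle, is a \emph{dimension-free} bound on the centered-adjacency term $\bar D^{-1/2}(A-\mathbb{E}A)\bar D^{-1/2}$. The naive estimate $\tau^{-1}\|A-\mathbb{E}A\|$ is useless in the sparse regime, where $\|A-\mathbb{E}A\|$ is governed by the largest degree and carries an unavoidable $\sqrt{\log n/\log\log n}$ factor; likewise, applying matrix Bernstein to the normalized entries $(A_{ij}-p_{ij})/\sqrt{\bar d_i\bar d_j}$ introduces a spurious $\log n$. To obtain a bound depending only on $d,\tau,r$ I would instead run a Feige--Ofek/Friedman--Kahn--Szemer\'edi-style argument: discretize the sphere by a $\delta$-net and split each bilinear form $x^{\top}\bar D^{-1/2}(A-\mathbb{E}A)\bar D^{-1/2}y$ into a light part (pairs of small normalized weight) and a heavy part. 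For a fixed direction the light part has fluctuation scale of order $\sqrt d/(\tau\sqrt n)$, and the standard $e^{O(n)}$ net cardinality inflates this to the dimension-free scale $\sqrt d/\tau\le\tau^{-1/2}(1+d/\tau)^{1/2}$; the heavy part is handled by bounded-degree counting, which closes \emph{without} requiring $d\gtrsim\log n$ precisely because the floor $\bar d_i\ge\tau$ replaces density in the normalization. The same discrepancy-type control is what upgrades the per-vertex Bernstein estimate to a uniform bound on $\eta$ with no $\log n$ penalty, keeping $\eta$ dimension-free.

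Finally I would collect the estimates. Choosing the confidence level proportional to $r$ so the exceptional events carry total probability at most $e^{-r}$, the degree-mismatch terms contribute $\lesssim\eta(1+d/\tau)$ with $\eta\lesssim r\,\tau^{-1/2}(1+d/\tau)^{3/2}$, and the centered-adjacency term contributes the bound from the previous step, polynomial in $r$ and a power of $(1+d/\tau)$. Summing the three pieces and bookkeeping the powers of $(1+d/\tau)$ produced by the degree factor, the norm $\|\bar A\|\lesssim d+\tau$, and the net argument yields the exponent $5/2$ together with the factor $r^2$, giving $\|\mathcal{L}(A_\tau)-\mathcal{L}(\bar A)\|\le C r^2\,\tau^{-1/2}(1+d/\tau)^{5/2}$ on an event of probability at least $1-e^{-r}$, as claimed.
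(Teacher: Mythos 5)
First, note that the paper does not actually prove this statement: it is quoted from Le, Levina and Vershynin \cite{LLV17RS}, and the only ingredient of its proof that reappears here is the bound on $B_1=\mathcal{L}(A_\tau)-\mathcal{N}=-D_\tau^{-1/2}(A-\bar A)D_\tau^{-1/2}$ cited in the proof of Theorem \ref{chth51lvdiowbe}. Measured against that argument, your proposal has a genuine gap at exactly the step you yourself flag as the heart of the matter. Your telescoping identity leaves the centered adjacency matrix sandwiched between the \emph{deterministic} normalizations $\bar D^{-1/2}$. Since every regularized expected degree satisfies $\bar d_i\le d+\tau$, one has $\bar D^{-1/2}\succeq (d+\tau)^{-1/2}I$ and hence $\|\bar D^{-1/2}(A-\mathbb{E}A)\bar D^{-1/2}\|\ge \|A-\mathbb{E}A\|/(d+\tau)$. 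In the sparse regime $d,\tau=O(1)$ the right-hand side is of order $\sqrt{\log n/\log\log n}$ with high probability, because $\|A-\mathbb{E}A\|$ is governed by the largest degree; so the dimension-free bound you want to extract by a Feige--Ofek/FKS net argument is simply false. Those arguments control the quadratic form restricted to light couples, or the norm after deleting high-degree vertices, not the full norm of this matrix. The theorem is nevertheless true because both \cite{LLV17RS} and the present paper keep the \emph{random} normalization on this term, via $\mathcal{N}=I-D_\tau^{-1/2}\bar A_\tau D_\tau^{-1/2}$, so that the adjacency deviation appears as $D_\tau^{-1/2}(A-\bar A)D_\tau^{-1/2}$: a vertex of anomalously large degree is damped by its own factor $(d_i+\tau)^{-1/2}$, and making this rigorous is precisely the content of the Grothendieck--Pietsch/restriction lemma of \cite{LLV17RS} invoked for $B_1$.

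The same sparse-regime phenomenon breaks your treatment of the degree-mismatch terms. You condition on $\eta=\max_i|d_i/\bar d_i-1|\le 1/2$, but $\max_i(d_i-\bar d_i)$ is of order $\log n/\log\log n$ when $d=O(1)$, so for fixed $\tau$ the event $\{\eta\le 1/2\}$ has probability tending to zero, and no ``discrepancy-type'' upgrade can remove the $\log n$ from the union bound over $n$ Bernstein estimates, because the maximum genuinely is that large. Both \cite{LLV17RS} and the present paper avoid the maximum altogether and aggregate the degree deviations in $\ell_2$: Lemma \ref{control vector d_i} gives $\sum_i(d_i-\bar d_i)^2\le Cr^2nd$ via the $\psi_{1/2}$ Orlicz norm, and the degree terms are then bounded in Hilbert--Schmidt norm, where the entrywise factor $(\bar A_{ij}+\tau/n)^2\le((d+\tau)/n)^2$ supplies the $1/n$ that makes the resulting bound dimension-free. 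Your overall architecture (adjacency deviation plus degree mismatch, with $\bar d_i\ge\tau$ as a floor) matches the known proof, but as written both halves rest on max-type quantities that diverge in exactly the regime the regularization is designed to handle.
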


This unsatisfying dependence on the regularization $\tau$ in (\ref{lv1.1t}), as noted in \cite{LLV17RS}, comes from a loose control of the deviation of degree matrix. So they raised a further question about whether this bound can be improved.

In Section 2 of this paper, our first main result Theorem \ref{chth51lvdiowbe} explores this question, and refine the term \((1 + \frac{d}{\tau})^{5/2}\) in (\ref{lv1.1t}) to \((1 + \frac{d}{\tau})^{\frac{1}{2}}\), for the regularized Laplacian matrix. The key ingredients of our proof technique lies in the development of a new method for controlling deviations in the adjacency matrix. This approach hinges on regulating two sub-terms of the decomposition of corresponding degree matrix. This method is tighter than that proposed by Le et al \cite{LLV17RS}.


Despite of the wide-spread application of the Laplacian matrix \cite{as18}, \cite{cl15}, \cite{GMZHZH17JMLR}, \cite{gv16ptrf}, \cite{ht10}, \cite{rs05}, the uniform concentration property is far from being well-understood. In Section 3, we delve into this uniform concentration property for Laplacian matrices in the homogeneous Erdős-Rényi graph setting, from multiple facets. Firstly, for any \( p \in [\log n / n, 1] \), define \( A_{p_{\tau}} \) as  in (\ref{spefapt}), where \( p_{ij} = p \), we show that \( \|\mathcal{L}(A_{p_{\tau}}) - \mathcal{L}(\mathbb{E}(A_{p_{\tau} }))\| \) exhibits concentration behavior, as Theorem \ref{them 1} implies. The catch of the proof stems from the uniform concentration results for degree matrices, as well as the uniform results for adjacency matrices established by Lugosi et al. \cite{LM20BER}. In addition, Theorem \ref{for kait} consider the case where the concentration bound is $\tau$ independent. Finally, in Theorem \ref{strange concentration}, we investigates the uniform concentration property of \( \|\mathcal{L}(A_p)\| \) around 1, expressed as \( \sup_{p \in [\frac{(\log n)^{1/\alpha}}{n}, 1]} \left| \|\mathcal{L}(A_p)\| - 1 \right|  \), where $\alpha$ can be any positive number less than $\frac{1}{2}$. This result may be useful when precise control over the spectral norm \( \|\mathcal{L}(A_p)\| \) is needed.

We summarize the notations used throughout the paper here. For a matrix $\mathbb{M}$, let $\bar{\mathbb{M}} := \mathbb{E} \mathbb{M}$. Constants $C, L$ are the universal constant whose value may vary from line to line, and $L(\alpha)$ is a constant related to the parameter \( \alpha \). The Orlicz norm \( \|\cdot\|_{\psi_{1/2}} \) for a random variable \( X \) is defined as \( \|X\|_{\psi_{1/2}} = \inf \{ t > 0 : \mathbb{E}[\psi_{1/2}(|X|/t)] \leq 1 \} \), where \( \psi_{1/2}(x) = \exp(x^{1/2}) - 1 \). Given a symmetric matrix \( \mathbb{M} \), the spectral norm \( \|\mathbb{M}\| \) is defined as the largest singular of \( \mathbb{M} \), which is equivalent to the largest absolute eigenvalue.
$
\|\mathbb{M}\| = \sup_{x \neq 0} \frac{\|\mathbb{M} x\|_2}{\|x\|_2}
$, where \( \|\cdot\|_2 \) denotes the Euclidean norm (or \( \ell_2 \)-norm) of a vector.  For a $n$ dimensional vector $y=(y_1,y_2,...,y_n)$, we define $y^{+}_i=y_i\vee 0$, and define $y^{-}_i=y_i \wedge 0$.


\section{main result}

\subsection{An improved concentration inequality for the spectral norm of the regularized Laplacian matrix.\\}

In this section, we adopt the same setup as Le et al. \cite{LLV17RS}, focusing on the Laplacian matrix of inhomogeneous Erdős-Rényi graphs. As mentioned before, after applying the regularization introduced in (\ref{spefapt}), the resulting Laplacian matrix effectively mitigates sparsity issues caused by small probabilities \( p_{ij} \). Our following result give the state-of-art characterization of the dependence on the regularization.

\begin{theorem}\label{chth51lvdiowbe} For  \( \tau > 0 \), consider the $\tau$-regularized Laplacian matrix $\mathcal{L}(A_{\tau})$ derived from Erdős-Rényi random graph as above. Then, for any \( r \geq 1 \), we have
\begin{equation}\label{eq:concentration}
\mathbb{P}\left\{ \left\| \mathcal{L}(A_{\tau}) - \mathcal{L}(\mathbb{E} A_{\tau}) \right\| \geq C\frac{r^2}{\sqrt{\tau}}\left(1+\frac{d}{\tau}\right)^{1/2} \right\} \leq e^{-r} .
\end{equation}

\end{theorem}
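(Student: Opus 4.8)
\emph{The plan.} I would start from the identity
\begin{equation*}
\mathcal{L}(A_{\tau}) - \mathcal{L}(\mathbb{E} A_{\tau}) = \bar{D}_{\tau}^{-1/2}\bar{A}_{\tau}\bar{D}_{\tau}^{-1/2} - D_{\tau}^{-1/2}A_{\tau}D_{\tau}^{-1/2},
\end{equation*}
where $\bar{A}_{\tau} := \mathbb{E} A_{\tau}$ and $D_{\tau},\bar{D}_{\tau}$ are the degree matrices of $A_{\tau},\bar{A}_{\tau}$, and split it as
\begin{equation*}
\mathcal{L}(A_{\tau}) - \mathcal{L}(\mathbb{E} A_{\tau}) = -\underbrace{D_{\tau}^{-1/2}(A_{\tau}-\bar{A}_{\tau})D_{\tau}^{-1/2}}_{T_1} \;-\; \underbrace{\bigl(D_{\tau}^{-1/2}\bar{A}_{\tau}D_{\tau}^{-1/2} - \bar{D}_{\tau}^{-1/2}\bar{A}_{\tau}\bar{D}_{\tau}^{-1/2}\bigr)}_{T_2}.
\end{equation*}
The reason for normalizing the random fluctuation $A_{\tau}-\bar{A}_{\tau}=A-\bar A$ by the \emph{actual} degrees $D_{\tau}^{-1/2}$ in $T_1$ (rather than by $\bar{D}_{\tau}^{-1/2}$) is that the atypically high-degree vertices, which are exactly what destroys the concentration of $\|A-\bar A\|$ in the sparse regime, are then automatically damped by the factor $(d_i+\tau)^{-1/2}$. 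Since $\|D_{\tau}^{-1/2}\|,\|\bar{D}_{\tau}^{-1/2}\|\le \tau^{-1/2}$, both $T_1$ and $T_2$ are a priori $O(\tau^{-1/2})$; the whole game is to extract the extra factor $(1+d/\tau)^{1/2}$ with the correct exponent $1/2$ rather than $5/2$.

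For $T_1$ I would pass to the bilinear form $\|T_1\|=\sup_{\|x\|=\|y\|=1}\sum_{ij}(A-\bar A)_{ij}(d_i+\tau)^{-1/2}x_i\,(d_j+\tau)^{-1/2}y_j$ and estimate it by separating the index pairs into light and heavy contributions, in the spirit of the discrepancy argument of \cite{LLV17RS}. This is where the new device enters: instead of the blunt comparison $\|D_{\tau}^{-1/2}\bar{D}_{\tau}^{1/2}\|\le(1+d/\tau)^{1/2}$ applied on both sides (which, used repeatedly, is precisely what inflates the exponent in \eqref{lv1.1t}), I would resolve the degree-deviation matrix $D_{\tau}-\bar{D}_{\tau}=D-\bar D$ into the positive and negative parts of its diagonal, $d_i-\bar d_i=(d_i-\bar d_i)^{+}+(d_i-\bar d_i)^{-}$, and treat the two sub-terms separately: the negative part is controlled deterministically by $\bar d_i\le d$, while the positive part is carried only by the few over-full vertices and is handled by a counting estimate. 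Keeping these contributions aggregated is what lets one retain a single factor $(1+d/\tau)^{1/2}$ instead of its higher powers.

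For the genuine degree-fluctuation term $T_2$ I would substitute $D_{\tau}^{-1/2}=\bar{D}_{\tau}^{-1/2}+R$ with $R=\mathrm{diag}\bigl((d_i+\tau)^{-1/2}-(\bar d_i+\tau)^{-1/2}\bigr)$ and expand $T_2=\bar{D}_{\tau}^{-1/2}\bar{A}_{\tau}R+R\bar{A}_{\tau}\bar{D}_{\tau}^{-1/2}+R\bar{A}_{\tau}R$. Each summand now sandwiches the \emph{deterministic} matrix $\bar{A}_{\tau}=\bar A+\tfrac{\tau}{n}\bar{1}\bar{1}^{\top}$ between diagonal factors, so I would bound its operator norm through a quadratic form in which $R_{ii}$ is linearized by the mean value theorem, $|R_{ii}|\le\tfrac{1}{2}\tau^{-3/2}|d_i-\bar d_i|$, and then again split according to the sign of $d_i-\bar d_i$. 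The decisive analytic input is that the surviving random quantities are quadratic in the centered degree sums $d_i-\bar d_i$; such products have $\psi_{1/2}$-Orlicz tails, which is exactly what produces the $r^2$ (rather than $r$) dependence at confidence level $e^{-r}$ promised by the statement.

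\emph{Main obstacle.} The crux, and the very reason the exponent in \eqref{lv1.1t} was $5/2$, is the operator-norm control of the degree-fluctuation terms in the sparse regime, where the individual degrees do \emph{not} concentrate and the naive estimate $\|R\|=\max_i|R_{ii}|\lesssim\tau^{-3/2}\sqrt d$ (a union bound over vertices) is simultaneously too weak and contaminates the bound with spurious powers of $(1+d/\tau)$ and of $\log n$. Turning the positive/negative-part decomposition of the degree deviation into an \emph{aggregate} quadratic-form estimate, rather than a vertex-wise maximum, is the step I expect to demand the most care, and it is precisely there that the sharp factor $(1+d/\tau)^{1/2}$ must be won.
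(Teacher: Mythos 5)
Your top-level architecture coincides with the paper's: the same split into $T_1=D_{\tau}^{-1/2}(A-\bar A)D_{\tau}^{-1/2}$ and the degree-fluctuation term $T_2$, the same expansion of $T_2$ into pieces that are linear and quadratic in the diagonal correction, and the same idea of replacing vertex-wise maxima by aggregate estimates with $\psi_{1/2}$-tails of quadratic functionals of the centered degrees. Two remarks before the main point. First, for $T_1$ no new work is needed: the paper simply quotes Le et al.'s bound $\|T_1\|\le \frac{Cr^2}{\tau}(\sqrt d+\sqrt\tau)$, which is already of order $\frac{r^2}{\sqrt\tau}(1+d/\tau)^{1/2}$; the exponent $5/2$ in (\ref{lv1.1t}) comes entirely from the degree term, so your plan to rework the light/heavy discrepancy argument there is effort spent in the wrong place. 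Second, the positive/negative-part split of $d_i-\bar d_i$, which you present as the new device, does not appear in the paper and, as argued below, does not rescue the step where your plan actually breaks.

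The genuine gap is the linearization $|R_{ii}|\le\tfrac12\tau^{-3/2}|d_i-\bar d_i|$. The mean value theorem gives $|R_{ii}|=\tfrac12\xi^{-3/2}|d_i-\bar d_i|$ with $\xi$ between $d_i+\tau$ and $\bar d_i+\tau$, and for under-full vertices (the negative part) one can only guarantee $\xi\ge d_i+\tau$, which may be as small as $\tau$. For such a vertex with $d_i=0$ and $\bar d_i\sim d\gg\tau$ your bound gives $\sqrt{\bar d_i+\tau}\,|R_{ii}|\sim (d/\tau)^{3/2}$, whereas the true value is $\sqrt{(\bar d_i+\tau)/\tau}-1\sim\sqrt{d/\tau}$: the linearization overshoots by a factor $d/\tau$ exactly in the regime the theorem is about. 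Your proposed deterministic fallback $|d_i-\bar d_i|\le\bar d_i\le d$ for the negative part inherits this loss, and feeding $R_{ii}^2\lesssim \tau^{-3}(d_i-\bar d_i)^2$ into the aggregate Hilbert--Schmidt estimate with $\sum_i(d_i-\bar d_i)^2\lesssim r^2nd$ lands at $\frac{r}{\sqrt\tau}\sqrt{d/\tau}\,(1+d/\tau)$ rather than $\frac{r}{\sqrt\tau}(1+d/\tau)^{1/2}$. The paper's proof never linearizes: it uses $|\sqrt a-\sqrt b|\le\sqrt{|a-b|}$ to get $\bigl|\sqrt{(\bar d_i+\tau)/(d_i+\tau)}-1\bigr|\le\sqrt{|d_i-\bar d_i|/(d_i+\tau)}$, so that the \emph{square} of the diagonal multiplier is only \emph{linear} in $|d_i-\bar d_i|$. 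Combining this with $\|(d_i-\bar d_i)(d_j-\bar d_j)\|_{\psi_{1/2}}\le C\sqrt{\bar d_i\bar d_j}$ and the row-sum identity $\sum_j(\bar A_{ij}+\tau/n)=\bar d_i+\tau$ is what produces the single factor $(1+d/\tau)^{1/2}$. This sublinear-in-deviation bound is the decisive inequality, and it is absent from your plan.
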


\begin{remark}
From (\ref{eq:concentration}), we observe that our result is sharper than (\ref{lv1.1t}) in Le et al. \cite{LLV17RS}. Although choosing \( \tau \sim d \) leads to same results, (\ref{eq:concentration}) is useful when \( \tau \ll d \) \cite{JY16AOS} or $\tau \gg d$ is considered. 

\end{remark}

We present a simple numerical experiment to show the effect of the regularization.  For an inhomogeneous Erdős–Rényi graph with \( n = 1000 \) vertices, 90\% of the vertices have an expected degree of 7, while 10\% have an expected degree of 35. To compare, we add the different $\tau$ to the degree of each vertex and display the changes in the spectral distribution of the graph below, it can be seen that regularization exhibits a strong coercive concentration.

\begin{figure}[H]
    \centering
    \begin{subfigure}[b]{0.45\textwidth}
        \centering
        \includegraphics[width=\textwidth]{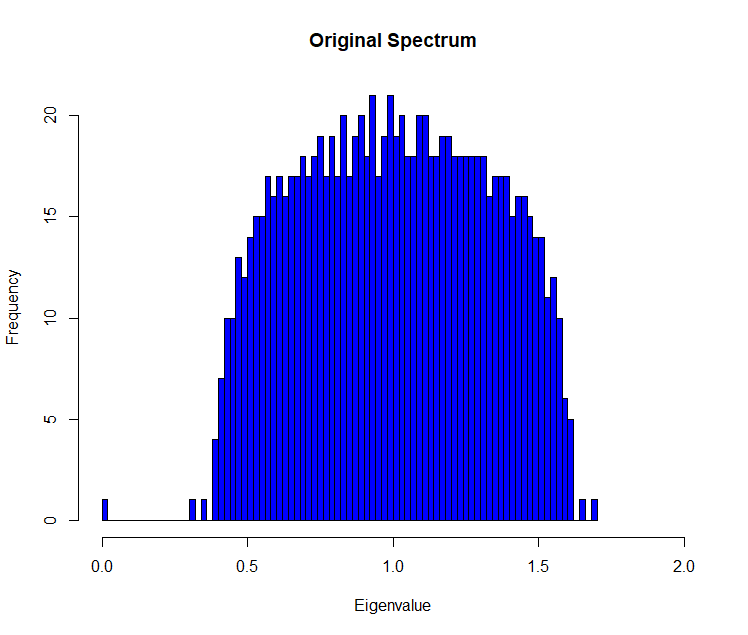} 
        \caption{} 
        \label{fig:image1} 
    \end{subfigure}
    \hfill
    \begin{subfigure}[b]{0.45\textwidth}
        \centering
        \includegraphics[width=\textwidth]{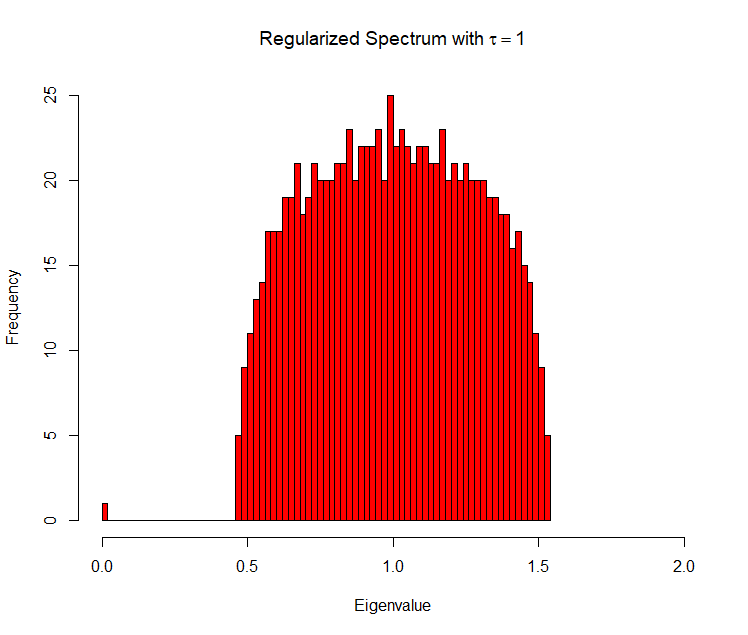} 
        \caption{} 
        \label{fig:image2} 
    \end{subfigure}


    \begin{subfigure}[b]{0.45\textwidth}
        \centering
        \includegraphics[width=\textwidth]{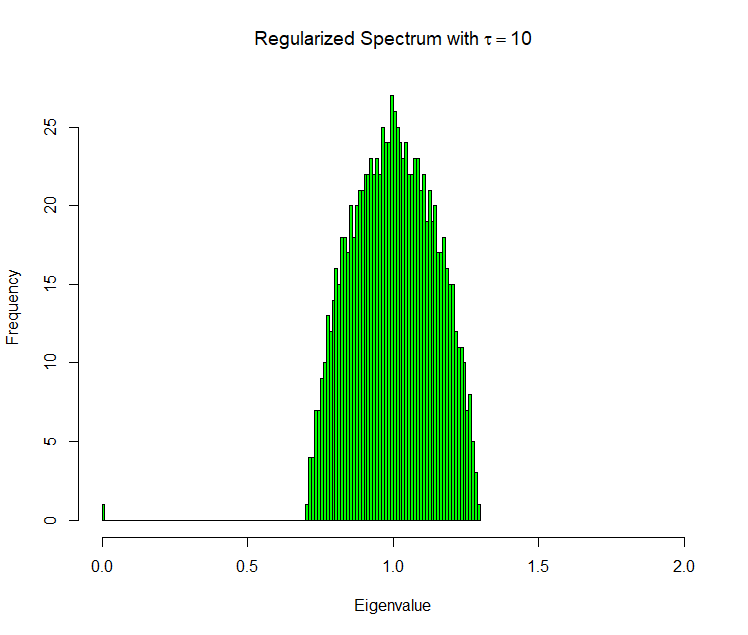} 
        \caption{} 
        \label{fig:image3} 
    \end{subfigure}
    \hfill
    \begin{subfigure}[b]{0.45\textwidth}
        \centering
        \includegraphics[width=\textwidth]{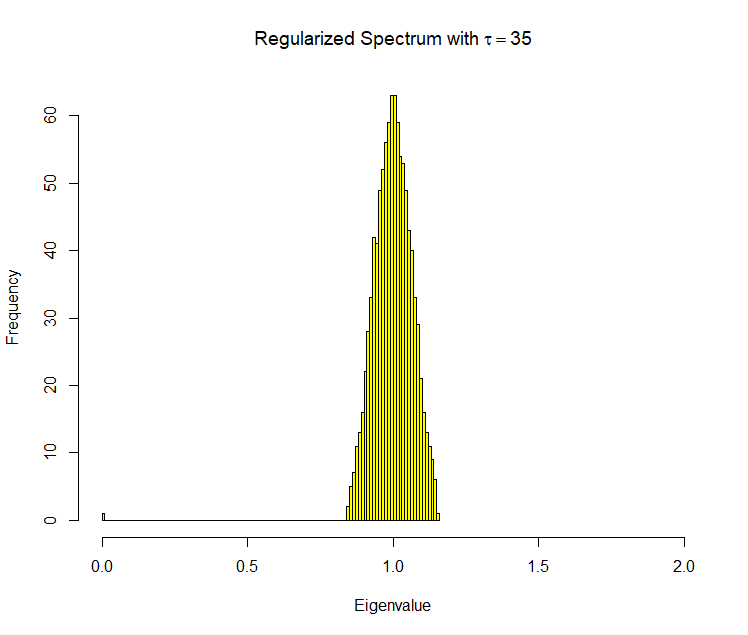} 
        \caption{} 
        \label{fig:image4} 
    \end{subfigure}

    \caption{} 
    \label{fig:all_images} 
\end{figure}

Let $d_i = \sum_{j=1}^{n} A_{ij}$ and $\bar{d}_i=\mathbb{E}d_i$. We begin the proof with a lemma providing an upper bound for the squared deviation of vertex degrees \( d_i \) from their expected values \( \bar{d}_i \).



\begin{lemma}\label{control vector d_i}
For any $r\geq 1$, with probability at least $1-e^{-2r}$, we have
\begin{equation}\label{homober}
\sum_{i=1}^{n}(d_i-\bar{d}_i)^2\leq Cr^2nd.
\end{equation}
\end{lemma}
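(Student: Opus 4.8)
The plan is to treat $\sum_{i=1}^n (d_i - \bar d_i)^2$ as a sum of squares of the individual degree deviations and to control it through the Orlicz norm $\|\cdot\|_{\psi_{1/2}}$, which simultaneously linearizes the squaring operation and---via the triangle inequality---dissolves the correlations between different degrees. Since $d_i - \bar d_i = \sum_{j\neq i}(A_{ij}-p_{ij})$ is a sum of independent, centered random variables bounded by $1$ with total variance $\sum_{j\neq i}p_{ij}(1-p_{ij}) \le \bar d_i \le d$ (using $p_{ij}\le d/n$), the expectation is already under control: $\mathbb{E}\sum_i (d_i-\bar d_i)^2 = \sum_i \mathrm{Var}(d_i) \le nd$. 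The point is to upgrade this to an exponential tail at the scale $r^2 nd$.

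The steps I would carry out are the following. First, a Bernstein (sub-gamma) estimate for each fixed $i$ gives $\mathbb{P}(|d_i-\bar d_i|\ge s)\le 2\exp(-cs^2/(d+s))$, which translates into the bound $\|d_i-\bar d_i\|_{\psi_1}\le C\sqrt d$ on the sub-exponential Orlicz norm (with $\psi_1(x)=e^x-1$). Second, I would invoke the elementary identity $\|Z^2\|_{\psi_{1/2}}=\|Z\|_{\psi_1}^2$ to get $\|(d_i-\bar d_i)^2\|_{\psi_{1/2}}\le C^2 d$ for every $i$. Third---and this is the step that handles the dependence for free---the triangle inequality for the norm $\|\cdot\|_{\psi_{1/2}}$ yields
\[
\Big\|\sum_{i=1}^n (d_i-\bar d_i)^2\Big\|_{\psi_{1/2}} \le \sum_{i=1}^n \|(d_i-\bar d_i)^2\|_{\psi_{1/2}} \le C^2 nd,
\]
with no independence among the $d_i$ required. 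Finally, the defining property of $\|\cdot\|_{\psi_{1/2}}$ together with Markov's inequality applied to $\exp\big((\,\cdot\,/C^2nd)^{1/2}\big)$ converts this into $\mathbb{P}(\sum_i(d_i-\bar d_i)^2\ge t)\le 2\exp(-(t/C^2nd)^{1/2})$; choosing $t=Cr^2nd$ with the constant large enough makes the right-hand side at most $e^{-2r}$ for all $r\ge 1$.

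The main obstacle is the first step: establishing the per-vertex sub-exponential bound with the correct $\sqrt d$ scaling, for which both the variance bound $\mathrm{Var}(d_i)\le d$ and the uniform boundedness of the summands must be used (the former controls the Gaussian bulk at scale $\sqrt d$, the latter the exponential tail). The conceptual crux, however, is the decision to pass through $\psi_{1/2}$ rather than attempting a direct concentration inequality for the sum: the square of a sub-exponential variable is only $\psi_{1/2}$, and it is precisely this tail heaviness that produces a failure probability of order $e^{-\Theta(r)}$ at the scale $r^2 nd$, matching the exponents in the statement, while the Orlicz triangle inequality neatly bypasses the weak correlations $\mathrm{Cov}(d_i,d_j)=\mathrm{Var}(A_{ij})$ coming from the shared edge $A_{ij}$. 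As a cross-check one may instead expand $\sum_i(d_i-\bar d_i)^2 = 2\sum_{i<j}(A_{ij}-p_{ij})^2 + 2\sum_i\sum_{j<k}(A_{ij}-p_{ij})(A_{ik}-p_{ik})$ and bound the two pieces by Bernstein's inequality and a variance-sensitive Hanson--Wright inequality respectively; this route is more computational but confirms the same $nd$ scale.
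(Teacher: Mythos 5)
Your proposal is correct and follows essentially the same route as the paper: a per-vertex Bernstein bound giving $\|(d_i-\bar d_i)^2\|_{\psi_{1/2}}\le Cd$, the (quasi-)triangle inequality for $\|\cdot\|_{\psi_{1/2}}$ to absorb the dependence between degrees, and a Markov-type conversion back to a tail bound at scale $r^2nd$. You merely make explicit some steps the paper leaves implicit, such as the identity $\|Z^2\|_{\psi_{1/2}}=\|Z\|_{\psi_1}^2$.
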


\begin{proof}
    By Bernstein's inequality we have \( \mathbb{P}\left\{d_i-\bar{d}_i>Ct\sqrt{\bar{d}}\right\}\leq e^{-t} \) for all \( i \leq n \) and \( t \geq 1 \). This implies that $$ \left\|\left(d_i-\bar{d}_i\right)^2\right\|_{\psi_{1/2}}\leq C\bar{d}_i\leq Cd. $$ 
    
    By the triangle inequality, we have $$ \left\|\sum_{i=1}^{n}\left(d_i-\bar{d}_i\right)^2\right\|_{\psi_{1/2}}\leq Cnd, $$ which implies with probability at least $1-e^{-2r}$
    \begin{align*}
        \sum_{i=1}^{n}\left(d_i-\bar{d}_i\right)^2 \leq Cr^2 nd  .
    \end{align*}    
\end{proof}

\begin{proof}[Proof of the Theorem \ref{chth51lvdiowbe}]

The deviation of the Laplacian arises from two sources: the deviation of the adjacency matrix and the deviation of the degree sequence. We will address and bound these two components separately. 

\textbf{Decomposing the deviation.}
We first decompose the deviation into the following form.
\[
\mathcal{L}(A_{\tau}) - \mathcal{L}(\bar{A}_{\tau}) = D_{\tau}^{-1/2} A_{\tau} D_{\tau}^{-1/2} - \bar{D}_{\tau}^{-1/2} \bar{A}_{\tau} \bar{D}_{\tau}^{-1/2}.
\]

Here, \( D_{\tau} = \operatorname{diag}(d_i + \tau) \) and \( \bar{D}_{\tau} = \operatorname{diag}(\bar{d}_i + \tau) \) are the diagonal matrices with degrees of \( A_{\tau} \) and \( \bar{A}_{\tau} \) on the diagonal, respectively. Through the intermediate term \(\mathcal{N} := I - D_{\tau}^{-1/2} \bar{A}_{\tau} D_{\tau}^{-1/2}\), and the fact that \( A_{\tau} - \bar{A}_{\tau} = A - \bar{A} \), we can represent the deviation as

\[
(\mathcal{L}(A_{\tau})-\mathcal{N})+(\mathcal{N}-\mathcal{L}(\bar{A}_{\tau}))=:B_1+B_2.
\]

We will control the two term separately. 

\textbf{Bounding $B_1$.} For the $B_1$, we have already known that 
\[
\left\|B_1\right\|\leq \frac{Cr^2}{\tau}(\sqrt{d}+\sqrt{\tau})
\]
with probability at least $1-2n^{-r}$ from \cite{LLV17RS}, a key technique for controlling this term relies on a modified version of the Little-Grothendieck Theorem to bound the operator norm, which has important applications in operator theory and random matrix analysis. We do not delve into it in this paper, but interested readers can refer to \cite{LLV17RS} and \cite{P12}.

\textbf{Bounding $B_2$.}
For $B_2$, note that
 \begin{equation}\label{eq:normalized_laplacian_relation}
I - \mathcal{N} = D_{\tau}^{-1/2} \bar{D}_{\tau}^{1/2} \left(I - \mathcal{L}(\bar{A}_\tau)\right) \bar{D}_{\tau}^{1/2} D_{\tau}^{-1/2}.
\end{equation}

  Hence, we can decomposite $\|B_2\|$ as follows:
\begin{align}\label{ver51}
\|B_2\| &= \left\| D_{\tau}^{-1/2} \bar{D}_{\tau}^{1/2} \left(I - \mathcal{L}(\bar{A}_\tau)\right) \bar{D}_{\tau}^{1/2} D_{\tau}^{-1/2}- \left(I - \mathcal{L}(\bar{A}_\tau)\right)\right\| \nonumber\\
&= \left\| \left(D_{\tau}^{-1/2} \bar{D}_{\tau}^{1/2}-I\right)\left(I - \mathcal{L}(\bar{A}_\tau)\right)\left(\bar{D}_{\tau}^{1/2} D_{\tau}^{-1/2}\right)+\left(I - \mathcal{L}(\bar{A}_\tau)\right)\left(\bar{D}_{\tau}^{1/2} D_{\tau}^{-1/2}-I\right) \right\| \nonumber\\
&\leq \left\| \left(D_{\tau}^{-1/2} \bar{D}_{\tau}^{1/2}-I\right)\left(I - \mathcal{L}(\bar{A}_\tau)\right)\left(\bar{D}_{\tau}^{1/2} D_{\tau}^{-1/2}-I\right)\right\|\nonumber\\ 
& \quad+ \left\|\left(\bar{D}_{\tau}^{1/2} D_{\tau}^{-1/2}-I\right)\left(I - \mathcal{L}(\bar{A}_\tau)\right)\right\|+\left\|\left(I - \mathcal{L}(\bar{A}_\tau)\right)\left(\bar{D}_{\tau}^{1/2} D_{\tau}^{-1/2}-I\right) \right\| \nonumber\\
&= 2\left\|\left(\bar{D}_{\tau}^{1/2} D_{\tau}^{-1/2}-I\right)\left(I - \mathcal{L}(\bar{A}_\tau)\right)\right\|\nonumber\\
& \quad+   \left\| \left(D_{\tau}^{-1/2} \bar{D}_{\tau}^{1/2}-I\right)\left(I - \mathcal{L}(\bar{A}_\tau)\right)\left(\bar{D}_{\tau}^{1/2} D_{\tau}^{-1/2}-I\right)\right\|\nonumber\\
&=2\left\|H^{(1)}\right\| + \left\|H^{(2)}\right\|.
\end{align}

Observe that $\bar{D}_{\tau}^{1/2} D_{\tau}^{-1/2} - I$ is a diagonal matrix such that
\[
\left(\bar{D}_{\tau}^{1/2} D_{\tau}^{-1/2} - I\right)_{ii} = \sqrt{\frac{\bar{d}_i+\tau}{d_i+\tau}} - 1,
\]
we can control the spectral norm of the matrix by controling $\sqrt{\frac{\bar{d}_i+\tau}{d_i+\tau}} - 1$ for all $i\leq n$. Prior to this, we first bound the spectral norm by the Hilbert-Schmidt norm,

\begin{align*}
\left\|H^{(2)}\right\|^2\leq \left\|H^{(2)}\right\|_{HS}^2&=\sum_{i,j=1}^{n}(H_{ij}^{(2)})^2, \\ 
\end{align*}
with 
\begin{align*}
    (H_{ij}^{(2)})^2&=\left(\sqrt{\frac{\bar{d}_i+\tau}{d_i+\tau}} - 1\right)^2\frac{(\bar{A}_{ij}+\frac{\tau}{n})^2}{(\bar{d}_i+\tau)(\bar{d}_j+\tau)}\left(\sqrt{\frac{\bar{d}_j+\tau}{d_j+\tau}} - 1\right)^2.\\
\end{align*}

Observe that, 
\begin{align*}
    \left|\sqrt{\frac{\bar{d}_i+\tau}{d_i+\tau}} - 1\right| &= \frac{\left|\sqrt{\bar{d}_i+\tau}-\sqrt{d_i+\tau}\right|}{\sqrt{d_i+\tau}}\leq \sqrt{\frac{|d_i-\bar{d_i}|}{d_i + \tau}},
\end{align*}
where we use $\left|\sqrt{a}-\sqrt{b}\right|\leq \sqrt{\left|a-b\right|}$.

Therefore we obtain
\begin{equation}\label{ver52}
\begin{aligned}
\left\|H^{(2)}\right\|^2&\leq \sum_{i,j=1}^{n} \left(\frac{|d_i-\bar{d_i}|}{d_i + \tau}\right)\left(\frac{|d_j-\bar{d_j}|}{d_j + \tau}\right)\frac{(\bar{A}_{ij}+\frac{\tau}{n})^2}{(\bar{d}_i+\tau)(\bar{d}_j+\tau)}\\
&\leq \frac{1}{\tau^2} \sum_{i,j=1}^{n} \left(\frac{|d_i-\bar{d_i}|}{\bar{d_i} + \tau}\right)\left(\frac{|d_j-\bar{d_j}|}{\bar{d_j} + \tau}\right) (\bar{A}_{ij}+\frac{\tau}{n})^2.
\end{aligned}
\end{equation}

We adjusted the denominator, extracting the factor of \( \frac{1}{\tau^2} \) from the first two terms. Through similar arguments as those used in Lemma \ref{control vector d_i}, we can easily obtain
$$
\left\| (d_i - \bar{d}_i)(d_j - \bar{d}_j) \right\|_{\psi_{1/2}} \leq C \sqrt{\bar{d}_i \bar{d}_j}.
$$

Thus, 
\begin{align*}
\left\| \sum_{i,j=1}^{n} \frac{|d_i - \bar{d}_i|}{\bar{d}_i + \tau}  \frac{|d_j - \bar{d}_j|}{\bar{d}_j + \tau} \left( \bar{A}_{ij} + \frac{\tau}{n} \right)^2 \right\|_{\psi_{1/2}} &\leq C \sum_{i,j=1}^{n} \frac{\sqrt{\bar{d}_i \bar{d}_j}\left( \bar{A}_{ij} + \frac{\tau}{n} \right)^2}{(\bar{d}_i + \tau)(\bar{d}_j + \tau)} \\
&\leq C \sum_{i,j=1}^{n} \frac{\left( \bar{A}_{ij} + \frac{\tau}{n} \right)^2}{\sqrt{(\bar{d}_i + \tau)(\bar{d}_j + \tau)}}.
\end{align*}

Therefore, we have with probability at least $1-\exp(-r)$,
\begin{equation}\label{tc22.3}
\begin{aligned}
&\quad\sum_{i,j=1}^{n} \left(\frac{|d_i-\bar{d_i}|}{\bar{d_i} + \tau}\right)\left(\frac{|d_i-\bar{d_i}|}{\bar{d_i} + \tau}\right) (\bar{A}_{ij}+\frac{\tau}{n})^2\\& \leq C r^2 \sum_{i,j=1}^{n} \frac{\left( \bar{A}_{ij} + \frac{\tau}{n} \right)^2}{\sqrt{(\bar{d}_i + \tau)(\bar{d}_j + \tau)}}\\
& \leq C r^2 \sqrt{\left(\sum_{i,j=1}^{n} \frac{(\bar{A}_{ij} + \frac{\tau}{n})}{\bar{d}_i + \tau}\frac{d+\tau}{n} \right)\left(\sum_{i,j=1}^{n} \frac{(\bar{A}_{ij} + \frac{\tau}{n})}{\bar{d}_j + \tau}\frac{d+\tau}{n}\right)} \\
& \leq C r^2 (d+\tau).
\end{aligned}
\end{equation}

In the last inequality, we used the fact that the numerator of \( \sum_{i,j=1}^{n} \frac{\bar{A}_{ij} + \frac{\tau}{n}}{\bar{d}_i + \tau} \), when summed over \( j \), equals \( \bar{d}_i + \tau \). Combine (\ref{ver52}), 
 (\ref{tc22.3}), we obtain that with probability at least $1-\exp(-2r)$,
\begin{equation}\label{tc22.h2}
\begin{aligned}
H^{(2)}\leq \frac{C r}{\sqrt{\tau}}(1+\frac{d}{\tau})^{\frac{1}{2}}.
\end{aligned}
\end{equation}

Similarly, we can bound $H^{(1)}$ at the same way:
\begin{equation}\label{ver53}
\left\|H^{(1)}\right\|\leq \frac{1}{\tau} \sqrt{\sum_{i,j=1}^{n} \left(\frac{|d_i-\bar{d_i}|}{\bar{d_i} + \tau}\right) (\bar{A}_{ij}+\frac{\tau}{n})^2}\leq \frac{C r}{\sqrt{\tau}}(1+\frac{d}{\tau})^{\frac{1}{2}}  
\end{equation}
with probability at least $1-\exp(-2r)$.

In strength of \eqref{tc22.h2} and \eqref{ver53}, we have
$$
\left\|B_2\right\|\leq \frac{Cr}{\sqrt{\tau}}\left(1+\frac{d}{\tau}\right)^{\frac{1}{2}}
$$
with probability at least $1-2e^{-2r}$.

Using the inequality $\left\|B_1+B_2\right\|\leq \left\|B_1\right\|+\left\|B_2\right\|$, the resulting bound holds with probability at least $1-2n^{-r}-2e^{-2r}\geq 1-e^{-r}$, as claimed.
\end{proof}

An important application regarding Theorem \ref{chth51lvdiowbe} is the community detection problem in Stochastic Block Model. For comparison, we consider the simplest example of the SBM model, which is also a special case of the  inhomogeneous Erdös-Rényi graph. The model \( G(n, a/n, b/n) \) partitions \( n \) nodes ($n$ is assumed even for simplicity) into two equal-sized communities (each of size \( n/2 \)). Edges within the same community are formed independently with probability \( a/n \), and edges between communities are formed with probability \( b/n \). Our goal is to recover the community labels of the vertices by the the regularized spectral clustering. Specifically, we assign nodes to communities based on the sign (positive or negative) of the corresponding entries of the eigenvector \( v_{2}\left(\mathcal{L}\left(A_{\tau}\right)\right) \) corresponding to the second smallest eigenvalue of the regularized Laplacian matrix \( \mathcal{L}\left(A_{\tau}\right) \).

We can next show that our \(C_{\varepsilon}\) is smaller than that of Le et al. \cite{LLV17RS}. It can be seen that the term $C\frac{r^2}{\sqrt{\tau}}\left(1+\frac{d}{\tau}\right)^{1/2}$ in (\ref{eq:concentration}) decreases as \(\tau\) increases. Thus, by choosing a \( \tau \) that is greater than \( d \), the term $\left(1+\frac{d}{\tau}\right)^{1/2}$ is sharper than $\left(1+\frac{d}{\tau}\right)^{5/2}$, we require a condition for recovering the community that is less stringent than the condition (1.7) in Le et al.\cite{LLV17RS}.

\begin{corollary}
Let $\varepsilon > 0$ and $r \geq 1$. Let $A$ be the adjacency matrix drawn from the stochastic block model $G\left(n,\frac{a}{n},\frac{b}{n}\right)$. Assume that
\[
(a-b)^{2} > C_{\varepsilon}(a+b)
\]
where $C_{\varepsilon} = C_1 r^{4}\varepsilon^{-2}$ and $C_1$ is an appropriately large absolute constant. Choose $\tau = 2 \left(d_{1} + \cdots + d_{n}\right) / n$ where $d_{i}$ are vertex degrees. Then with probability at least $1 - e^{-r}$, we have
\[
\min_{\beta=\pm 1} \left\| v_{2}\left(\mathcal{L}\left(A_{\tau}\right)\right) + \beta v_{2}\left(\mathcal{L}\left(\mathbb{E} A_{\tau}\right)\right) \right\| \leq \varepsilon.
\]
\end{corollary}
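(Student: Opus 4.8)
The plan is to combine the spectral perturbation bound of Theorem \ref{chth51lvdiowbe} with a Davis--Kahan $\sin\Theta$ argument, after computing the spectrum of the population Laplacian $\mathcal{L}(\mathbb{E}A_\tau)$ explicitly. Throughout I assume $a>b$ (the assortative case, which is what makes $v_2$ the community-bearing eigenvector) and write $\sigma\in\{\pm1\}^n$ for the vector of community labels, so that $\bar{1}^\top\sigma=0$ and $\|\sigma\|_2^2=\|\bar{1}\|_2^2=n$.

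First I would compute $\mathcal{L}(\mathbb{E}A_\tau)$ exactly. A crucial simplification is that in the balanced two-block model every vertex has the same expected degree $\bar d_i=\frac{a+b}{2}-\frac{a}{n}=:\bar d$, so $\bar D_\tau=(\bar d+\tau)I=:cI$ is a scalar matrix and $\mathcal{L}(\mathbb{E}A_\tau)=I-c^{-1}\mathbb{E}A_\tau$. Writing $\mathbb{E}A_\tau=\frac{a+b+2\tau}{2n}\bar{1}\bar{1}^\top+\frac{a-b}{2n}\sigma\sigma^\top-\frac{a}{n}I$ exhibits it as a rank-two matrix plus a negligible diagonal shift, with eigenvectors $\bar{1}$, $\sigma$, and their orthogonal complement. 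Translating through $\mathcal{L}(\mathbb{E}A_\tau)=I-c^{-1}\mathbb{E}A_\tau$ yields eigenvalue $0$ along $\bar{1}$, eigenvalue $\mu_2=1-\frac{a-b}{2c}+O(\tfrac1n)$ along $\sigma$, and eigenvalue $1+O(\tfrac1n)$ on the remaining $n-2$ dimensions. Hence $v_2(\mathcal{L}(\mathbb{E}A_\tau))\propto\sigma$ (so its signs recover the communities), and the spectral gap separating $\mu_2$ from the rest of the spectrum is $g=\min(\mu_2,\,1-\mu_2)=\frac{a-b}{2c}+O(\tfrac1n)$, using $a-b<a+b<2c$.

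Next I would pin down the size of the random regularizer $\tau=2\sum_i d_i/n$. Since $\sum_i d_i$ is twice the number of edges, a Bernstein/Chernoff bound shows that on an event of probability at least $1-e^{-r}$ one has $\tau\asymp a+b$; in particular $\tau\ge d=\max(a,b)$ up to constants, so $(1+d/\tau)^{1/2}=O(1)$ and $c\asymp a+b$. On this event Theorem \ref{chth51lvdiowbe} gives, with probability at least $1-e^{-r}$,
\[
\|\mathcal{L}(A_\tau)-\mathcal{L}(\mathbb{E}A_\tau)\|\le \frac{Cr^2}{\sqrt{\tau}}\Big(1+\frac{d}{\tau}\Big)^{1/2}\le \frac{Cr^2}{\sqrt{a+b}}.
\]
The Davis--Kahan theorem, together with the elementary identity $\min_{\beta=\pm1}\|v+\beta v'\|_2\le\sqrt2\,\sin\Theta(v,v')$ for unit vectors, then bounds the left-hand side of the claim by $\sqrt2\,\|\mathcal{L}(A_\tau)-\mathcal{L}(\mathbb{E}A_\tau)\|/g\lesssim \frac{r^2\sqrt{a+b}}{a-b}$. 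Requiring this to be $\le\varepsilon$ is exactly $(a-b)^2\gtrsim r^4\varepsilon^{-2}(a+b)$, which is the hypothesis $(a-b)^2>C_\varepsilon(a+b)$ with $C_\varepsilon=C_1r^4\varepsilon^{-2}$; a final union bound over the (at most three) low-probability events keeps the total failure probability below $e^{-r}$ after adjusting the constant.

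The main obstacle I anticipate is the interplay between the randomness of $\tau$ and the hypotheses of both Theorem \ref{chth51lvdiowbe} and Davis--Kahan. Theorem \ref{chth51lvdiowbe} is stated for a deterministic $\tau$, whereas here $\tau$ is a function of the same graph; making the substitution rigorous requires conditioning on the good degree event $\{\tau\asymp a+b\}$ and checking that this conditioning does not destroy the concentration bound (equivalently, reworking the estimate with $\tau$ replaced by its deterministic surrogate $a+b$ and absorbing the $\tau/(a+b)=\Theta(1)$ factors). In addition, a clean application of Davis--Kahan needs the perturbation to be smaller than half the gap, i.e. $\frac{Cr^2}{\sqrt{a+b}}\le \tfrac12 g\asymp\frac{a-b}{a+b}$; but this is guaranteed precisely by the hypothesis (after enlarging $C_1$), so once the measurability issue for $\tau$ is handled the remaining estimates are routine.
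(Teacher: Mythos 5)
Your proposal is correct and follows essentially the same route as the paper: apply Theorem \ref{chth51lvdiowbe} with $\tau\asymp a+b$, compute that the spectral gap of the population Laplacian is of order $\frac{a-b}{a+b}$, and conclude via Davis--Kahan, with the hypothesis $(a-b)^2>C_\varepsilon(a+b)$ absorbing the resulting bound into $\varepsilon$. You are more explicit than the paper on two points it leaves implicit --- the exact rank-two spectrum of $\mathcal{L}(\mathbb{E}A_\tau)$ and the fact that the data-dependent $\tau=2\sum_i d_i/n$ must first be shown to concentrate around $a+b$ before the theorem (stated for deterministic $\tau$) can be invoked --- but these are refinements of the same argument, not a different one.
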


\begin{remark}
 The condition $(a-b)^{2} > C_{\varepsilon}(a+b)$ (one can also find in the condition (1.7) of Le et al.\cite{LLV17RS}.) defines a threshold that ensures successful recovery of the community structure, the performance of the recovery algorithm may not even surpass that of random guessing if this condition is not met. \(C_{\varepsilon}\) acts as a threshold parameter that depends on the desired accuracy \(\varepsilon\). 
\end{remark}

\begin{proof}
By the Davis-Kahan Theorem in \cite{bha96book}, we have 
\begin{equation}
\begin{aligned}
\min_{\beta = \pm 1} 
\left\| v_{2}\left(\mathcal{L}\left(A_{\tau}\right)\right) 
+ \beta v_{2}\left(\mathcal{L}\left(\mathbb{E} A_{\tau}\right)\right) \right\| 
&\leq \frac{2\|\mathcal{L}\left(A_{\tau}\right) - \mathcal{L}\left(\mathbb{E} A_{\tau}\right)\|}{\delta}\\
&\leq \frac{C \sqrt{6} r^2 (a+b)}{\sqrt{\tau} (a-b)}\\
&\leq \frac{C \sqrt{6} r^2 \sqrt{a+b} }{\sqrt{\tau} \sqrt{C_{\epsilon}}} \leq \epsilon,
\end{aligned}
\end{equation}
where $\delta$ is the spectral gap, which is the second smallest eigenvalues of the symmetric matrices \( \mathcal{L} \) and \( \left(\mathbb{E} A_{\tau}\right) \) have multiplicity one and  are of distance at least \( \delta > 0 \) from the remaining eigenvalues of \( \mathcal{L} \) and \( \left(\mathbb{E} A_{\tau}\right) \), the second inequality arises from the fact that, although \(\tau\) is a multiple of the average degree of the random graph, the order of its spectral gap remains \(\frac{a-b}{a+b}\).

\end{proof}


\section{Uniform concentration properties for the Laplacian matrix.\\}
In this section, we are under the setting of the homogeneous Erdős–Rényi graph, to explore the uniform concentration properties of the Laplacian matrix from different perspectives.  

\subsection{Uniform concentration inequality for the regularized Laplacian matrix.}\label{suec3.1}\

\

We first need to present the uniform concentration inequality for homogeneous adjacency matrices, as established by Lugosi et al. \cite{LM20BER}.

\begin{lemma}\label{lem:concentration}
 For all \( r \geq 1 \) and \( q \in [\log n / n, \frac{1}{2}] \), 
\[
\mathbb{P} \left\{ \sup_{p \in [q,2q]} \|A_p - \mathbb{E}A_p\| > Lr\sqrt{nq} \right\} \leq Le^{-npr^2}.
\]
\end{lemma}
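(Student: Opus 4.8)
The plan is to exploit the monotone coupling of the family $\{A_p\}_{p\in[0,1]}$ together with a discretization of the parameter interval, so that the supremum reduces to a union bound over finitely many fixed graphs plus a control of the increments. Concretely, realize every $A_p$ on the common probability space of the uniforms $(U_{ij})$ via $(A_p)_{ij}=\mathbbm{1}_{U_{ij}<p}$, so that $p\mapsto A_p$ is entrywise nondecreasing. Partition $[q,2q]$ into a grid $q=p_0<p_1<\cdots<p_M=2q$ with uniform spacing $\delta:=p_{k+1}-p_k\asymp\sqrt{q/n}$, so that $M\asymp\sqrt{nq}$. For any $p\in[p_k,p_{k+1}]$ the triangle inequality gives
\[
\|A_p-\mathbb{E}A_p\|\le \|A_{p_k}-\mathbb{E}A_{p_k}\|+\|A_p-A_{p_k}\|+\|\mathbb{E}A_p-\mathbb{E}A_{p_k}\|,
\]
and I will bound each of the three pieces by $O(r\sqrt{nq})$ uniformly in $k$.

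\textbf{The two random terms.} These are handled by the same two ingredients. First, for a fixed Erdős--Rényi adjacency matrix the expected spectral norm obeys a Bandeira--van Handel type bound $\mathbb{E}\|A_s-\mathbb{E}A_s\|\lesssim\sqrt{ns}+\sqrt{\log n}$; applied with $s=p_k\le 2q$ this gives $\mathbb{E}\|A_{p_k}-\mathbb{E}A_{p_k}\|\lesssim\sqrt{nq}$, since $\sqrt{\log n}\le\sqrt{nq}$ on the range $q\ge\log n/n$. Second, the map $A\mapsto\|A\|$ is convex and $1$-Lipschitz in the Frobenius norm, and $A_{p_k}$ is a function of the independent entries $\mathbbm{1}_{U_{ij}<p_k}\in[0,1]$, so Talagrand's concentration inequality for convex Lipschitz functions of bounded independent variables yields $\mathbb{P}\{\|A_{p_k}-\mathbb{E}A_{p_k}\|>\mathbb{E}\|A_{p_k}-\mathbb{E}A_{p_k}\|+t\}\le Ce^{-ct^2}$. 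Taking $t=r\sqrt{nq}$ gives $\|A_{p_k}-\mathbb{E}A_{p_k}\|\le Cr\sqrt{nq}$ with probability at least $1-Ce^{-cnqr^2}$. The deterministic term is immediate: $\mathbb{E}A_p=p(\bar{1}\bar{1}^{\top}-I)$, so $\|\mathbb{E}A_p-\mathbb{E}A_{p_k}\|=(p-p_k)(n-1)\le\delta n\asymp\sqrt{nq}$.

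\textbf{The increment term.} The quantity $\|A_p-A_{p_k}\|$ is where the coupling is essential. By monotonicity, $0\le A_p-A_{p_k}\le A_{p_{k+1}}-A_{p_k}$ entrywise, and all three matrices are symmetric and nonnegative; by Perron--Frobenius monotonicity of the spectral radius under entrywise domination (which coincides with the spectral norm for symmetric nonnegative matrices) this gives $\|A_p-A_{p_k}\|\le\|A_{p_{k+1}}-A_{p_k}\|$. The matrix $A_{p_{k+1}}-A_{p_k}$ has independent entries $\mathbbm{1}_{p_k\le U_{ij}<p_{k+1}}$, i.e. it is the adjacency matrix of $G(n,\delta)$; applying the same two ingredients with $s=\delta\asymp\sqrt{q/n}$ gives $\mathbb{E}\|A_{p_{k+1}}-A_{p_k}\|\lesssim\sqrt{n\delta}+\delta n+\sqrt{\log n}\lesssim\sqrt{nq}$, and hence $\|A_{p_{k+1}}-A_{p_k}\|\le Cr\sqrt{nq}$ off an event of probability at most $Ce^{-cnqr^2}$.

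\textbf{Assembly and obstacle.} Combining the three estimates, $\sup_{p\in[q,2q]}\|A_p-\mathbb{E}A_p\|\le Lr\sqrt{nq}$ holds on the intersection of the $2(M+1)$ good events indexed by the grid. A union bound costs a factor $M+1\asymp\sqrt{nq}$, so the total failure probability is at most $C\sqrt{nq}\,e^{-cnqr^2}$; since $nq\ge\log n$ and $r\ge1$ we have $\sqrt{nq}\le e^{\frac{c}{2}nqr^2}$, and this polynomial factor is absorbed into the exponent, giving a bound of the stated form (with $p\asymp q$ on the interval). The main obstacle is precisely the increment control: a naive bounded-differences or McDiarmid argument applied directly to the supremum concentrates only at scale $n$ and is useless, so the fluctuation across the continuum of $p$ must instead be routed through the monotone coupling and the Perron--Frobenius domination. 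One must also control the expected spectral norm of the sparse increment graphs $G(n,\delta)$ in the regime $n\delta<\log n$, where the norm is governed by the maximal degree rather than by $\sqrt{n\delta}$; the $\sqrt{\log n}$ term in the expected-norm bound is exactly what makes this step succeed uniformly over the grid.
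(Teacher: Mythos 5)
Your argument is correct and follows essentially the same route as the source of this lemma: the paper itself gives no proof (it quotes the result from Lugosi--Mendelson--Zhivotovskiy, proved via the monotone coupling $(A_p)_{ij}=\mathbbm{1}_{U_{ij}<p}$, a discretization of $[q,2q]$, entrywise monotonicity of the norm for nonnegative symmetric matrices, the Bandeira--van Handel expectation bound, and Talagrand-type concentration at each grid point --- exactly your ingredients, and the same skeleton the paper reuses in its proof of Lemma \ref{lem:lemma2.2}). The only cosmetic difference is your coarser grid of spacing $\asymp\sqrt{q/n}$ versus the original's $1/n$; both choices make the increment graphs' norms $O(\sqrt{nq})$ and the union-bound cost absorbable, so nothing is lost.
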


\begin{remark}
The Lemma \ref{lem:concentration} leverages the sharp concentration inequality proposed in Corollary 3.6 of Bandeira et al. \cite{BV16}, which provides an important perspective for handling Laplacian matrices.

Additionally, Bandeira \cite{as18} built on the tools from \cite{BV16} to derive a sharp upper bound for the largest eigenvalue of a class of random Laplacian matrices, where the optimality of semidefinite relaxation methods for problems such as community detection in stochastic block models was established. 
\end{remark}

  Let  $d_p$ be an $n$-dimensional vector such that $(d_p)_i=\sum^{n}_{j=1}(A_p)_{ij}$. We then have the following concentration inequality for the degree vector, which is an extension of the uniform version of \cite{LLV17RS}.

\begin{lemma}\label{lem:lemma2.2}
 For any \( q' \in [1/n, 1/2] \) and \( r \geq 1 \), with probability at least \( 1 - nq' e^{-r} \), the following holds
\[
\sup_{p \in [q', 2q']} \left\| d_p - \mathbb{E}d_p \right\|_2 \leq L n \sqrt{q' }r.
\]
\end{lemma}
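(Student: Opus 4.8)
The plan is to exploit the standard monotone coupling of Erdős–Rényi graphs through the uniform variables $U_{ij}$: since $(A_p)_{ij} = \mathbbm{1}_{U_{ij} < p}$ is nondecreasing in $p$, each degree $(d_p)_i = \sum_{j\neq i}(A_p)_{ij}$ is a nondecreasing step function of $p$, while its mean $\mathbb{E}(d_p)_i = (n-1)p$ grows continuously. This monotonicity is what lets me replace the continuum supremum over $[q',2q']$ by a supremum over a finite net, with a \emph{deterministic} control of the oscillation between net points. Concretely, I would lay down the net $p_k = q' + k/n$ for $k = 0,1,\dots,\lceil nq'\rceil$, so that the grid has roughly $nq'$ points and spacing $\delta = 1/n$.

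For the bridging step, fix $p \in [p_k, p_{k+1}]$ and write $f_i(p) := (d_p)_i - (n-1)p$. Using monotonicity of $(d_p)_i$ together with monotonicity of the mean, I would bound $f_i(p) \le f_i(p_{k+1}) + (n-1)\delta$ and $f_i(p) \ge f_i(p_k) - (n-1)\delta$, so that $|f_i(p)| \le \max\{|f_i(p_k)|,|f_i(p_{k+1})|\} + 1$. Squaring and summing over $i$ gives
\[
\|d_p - \mathbb{E}d_p\|_2^2 \le 2\big(\|d_{p_k}-\mathbb{E}d_{p_k}\|_2^2 + \|d_{p_{k+1}}-\mathbb{E}d_{p_{k+1}}\|_2^2\big) + 2n,
\]
where the additive $2n$ is exactly the deterministic discretization error produced by the spacing $\delta = 1/n$. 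Since $nq' \ge 1$ and $r \ge 1$, this error is dominated by $r^2 n^2 q'$ and will be harmless.

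It then remains to control the values at the net points. At each fixed $p_k \in [q',2q']$ the graph is homogeneous with $d = np_k \le 2nq'$, so Lemma \ref{control vector d_i} gives $\|d_{p_k}-\mathbb{E}d_{p_k}\|_2^2 \le C r^2 n\,(2nq') = 2C r^2 n^2 q'$ with probability at least $1-e^{-2r}$. I would union bound this over all $\lceil nq'\rceil$ net points, incurring a failure probability of at most $nq'\,e^{-2r} \le nq'\,e^{-r}$. On the complementary event, combining with the bridging inequality yields $\sup_{p\in[q',2q']}\|d_p-\mathbb{E}d_p\|_2^2 \le L r^2 n^2 q'$, and taking square roots gives the claim.

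The main obstacle is the choice of discretization. One cannot simply union bound over the jump points of $d_p$, since these are random and roughly $n^2 q'$ in number, which would inflate the failure probability beyond the stated $nq'e^{-r}$. The monotone coupling is precisely what converts the problem into a net of the much smaller size $\sim nq'$ with a deterministic, rather than random, in-between error $(n-1)\delta$; balancing this error against the per-point tail bound from Lemma \ref{control vector d_i} is the crux, and the spacing $\delta = 1/n$ is exactly what makes both the net cardinality and the oscillation match the target rate $n\sqrt{q'}\,r$ and probability $1 - nq'e^{-r}$.
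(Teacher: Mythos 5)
Your proposal is correct and takes essentially the same route as the paper: the same net $p_k = q' + k/n$ over $[q',2q']$ with $\lceil nq'\rceil$ points, the same monotone coupling of the graphs through the uniforms $U_{ij}$, the pointwise bound of Lemma \ref{control vector d_i} at each net point with $d \le 2nq'$, and a union bound over the net. The one (minor, and if anything cleaner) difference is the bridging step: you sandwich $(d_p)_i-(n-1)p$ deterministically between its values at the two adjacent net points, paying only an additive $1$ per coordinate, whereas the paper controls the oscillation probabilistically via $\left\|d_{p_{k+1}}-d_{p_k}\right\|_2 \overset{d}{=} \left\|d_{1/n}\right\|_2$ and a second application of the concentration bound; the small constant-level slack between the accumulated failure probability $\sim nq'e^{-2r}$ and the stated $nq'e^{-r}$ is the same slack the paper itself absorbs.
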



\begin{remark}
Note that some properties of \( d_p \) and \( A_p \) are the same. Therefore, our approach to the uniform concentration property of \( d_p \) is similarly inspired by \cite{LM20BER}. It is also worth mentioning that this result does not require \( p \geq \log n / n \), which may have independent interest when considering the degree concentration properties for \( p \in \left[\frac{1}{n}, \frac{\log n}{n}\right] \).
\end{remark}

\begin{proof} 
  It is easy to see that the norm of vector with non-negative entries is monotonic with respect to each entry, and \( d_p - d_{p_i} \overset{d}{=} d_{p - p_i} \) for \( p > p_i \). By (\ref{homober}), the concentration properties of \( d_p \) can be obtained with probability at least $1-e^{-2r}$
    \begin{align}\label{nonhonoberin}
        \sum_{i=1}^{n}((d_p)_i-\mathbb{E}(d_p)_i)^2 \leq Ln^2pr^2 .
    \end{align}
Let now \( q' \geq 1 / n \) and for \( i = 0, 1, \ldots, \lceil n q' \rceil \), define \( p_i = q' + i / n \). Then using the triangle inequality, combined with \( d_p - d_{p_i} \stackrel{d}{=} d_{p - p_i} \) for \( p > p_i \) and the monotonicity of the operator norm of the vector with non-negative entries with respect to each entry,
$$
\begin{aligned}
    & \sup _{p \in\left[p_i, p_{i+1}\right]}\left(\left\|d_p - \mathbb{E} d_p\right\|_2 - \left\|d_{p_i} - \mathbb{E} d_{p_i}\right\|_2\right) \\&\quad\leq \sup _{p \in\left[p_i, p_{i+1}\right]}\left(\left\|d_p - d_{p_i}\right\|_2 + \left\|\mathbb{E} d_p - \mathbb{E} d_{p_i}\right\|_2\right) \\
    & \quad= \left\|d_{p_{i+1}} - d_{p_i}\right\|_2 + \left\|\mathbb{E} d_{1 / n}\right\|_2 \\
    & \quad\leq \left\|d_{p_{i+1}} - d_{p_i}\right\|_2 + Ln\sqrt{q'}r \\
    &\quad \leq \left\|d_{1/n} - \mathbb{E}d_{1/n}\right\|_2 + \left\|\mathbb{E}d_{1/n}\right\|_2 + Ln\sqrt{q'} \\
    & \quad\leq Ln\sqrt{q'}r,
\end{aligned}
$$
with probability at least \( 1 - e^{-2r} \). 

Thus, we conclude that with probability at least \( 1 - L n q' e^{-2r} \), one has
$$
\begin{aligned}
   \sup _{p \in[q', 2 q']}\left\|d_p - \mathbb{E} d_p\right\|_2 & \leq \max _{i \in\{0, \ldots, [n q']\}}\left\|d_{p_i} - \mathbb{E} d_{p_i}\right\|_2 + L n \sqrt{q'}r \\
   & \leq L n \sqrt{ q'}r.
\end{aligned}
$$
\end{proof}

Furthermore, we establish the following uniform concentration property of the degree matrix. To obtain a sharper biased uniform upper bound, we introduce a parameter \( \alpha \). It is worth noting that the requirement for \( \alpha \) essentially demands the graph to be denser. It is also worth noting that the parameter \( \alpha \) is essentially related to the graph's density, with smaller values of \( \alpha \), requiring a denser graph.

\begin{lemma}[Concentration of degrees]\label{Concentration of degrees}
    Let $D_p$ be a diagonal matrix such that $D_p=diag((d_p)_i)$, then, for any $0<\alpha<1/2$ and any $q\in [\frac{(\log n)^{\frac{1}{\alpha}}}{n},1/2]$, with probability at least $1-Le^{-L\cdot(np)^\alpha}$, we have 
    $$
    \sup_{p\in [q,2q]} \left\|D_p-\mathbb{E}D_p\right\|\leq L\cdot(nq)^{\alpha+1/2}.
    $$
\end{lemma}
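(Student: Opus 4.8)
The plan is to exploit that $D_p-\mathbb{E}D_p$ is diagonal, so its spectral norm is simply $\|D_p-\mathbb{E}D_p\|=\max_{1\le i\le n}\bigl|(d_p)_i-\mathbb{E}(d_p)_i\bigr|$. Thus the lemma reduces to a uniform (in $p$) maximal inequality for the centered coordinates of the degree vector $d_p$, and I would mirror the discretization scheme already used in the proof of Lemma \ref{lem:lemma2.2}, now tracking the maximum over coordinates rather than the Euclidean norm.

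First I would handle a single fixed $p\in[q,2q]$. Each coordinate $(d_p)_i=\sum_j (A_p)_{ij}$ is a sum of independent Bernoulli$(p)$ variables with variance at most $np$, so Bernstein's inequality gives $\mathbb{P}\{|(d_p)_i-\mathbb{E}(d_p)_i|>t\}\le 2\exp\bigl(-t^2/(2(np+t/3))\bigr)$. Taking $t=L(nq)^{\alpha+1/2}$ and using $\alpha<1/2$ (so that $t=o(np)$, whence the variance term dominates the Bernstein denominator), the exponent is of order $(nq)^{2\alpha}$; a union bound over the $n$ coordinates yields $\mathbb{P}\{\max_i|(d_p)_i-\mathbb{E}(d_p)_i|>L(nq)^{\alpha+1/2}\}\le 2n\exp(-c(nq)^{2\alpha})$.

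To pass to the supremum over $p\in[q,2q]$ I would place a grid $p_k=q+k/n$ for $k=0,\dots,\lceil nq\rceil$ and, exactly as in Lemma \ref{lem:lemma2.2}, use the coupling $d_p-d_{p_k}\overset{d}{=}d_{p-p_k}$ together with the monotonicity of each coordinate in $p$. For $p\in[p_k,p_{k+1}]$ this bounds $|(d_p)_i-\mathbb{E}(d_p)_i|$ by $|(d_{p_k})_i-\mathbb{E}(d_{p_k})_i|$ plus the random increment $\bigl((d_{p_{k+1}})_i-(d_{p_k})_i\bigr)$ plus the deterministic mean drift $\mathbb{E}(d_{p_{k+1}})_i-\mathbb{E}(d_{p_k})_i\le 1$. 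Each random increment is distributed as a coordinate of $d_{1/n}$, i.e.\ Binomial$(n-1,1/n)$, whose Poisson-type upper tail makes $\max_{k,i}\bigl((d_{p_{k+1}})_i-(d_{p_k})_i\bigr)$ negligible relative to $(nq)^{\alpha+1/2}$ with overwhelming probability. Combining the grid estimate with these increments and taking a union bound over the $\lceil nq\rceil+1$ grid points and the $n$ coordinates gives a total failure probability of order $n^2q\exp(-c(nq)^{2\alpha})$.

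The final step, and the point where the role of $\alpha$ becomes decisive, is absorbing the polynomial factor $n^2q$ into the exponential. The hypothesis $q\ge (\log n)^{1/\alpha}/n$ forces $(nq)^{\alpha}\ge \log n$, hence $(nq)^{2\alpha}\ge(\log n)^2\gg\log(n^2q)$, so $n^2q\exp(-c(nq)^{2\alpha})\le L\exp(-L(nq)^{\alpha})$, which is the claimed bound (and also explains why the clean exponent $(nq)^\alpha$ suffices in the statement, rather than the sharper $(nq)^{2\alpha}$ the argument actually produces). The main obstacle I anticipate is not any single tail estimate but the bookkeeping of the uniform argument: ensuring that the increment terms from the discretization and the deterministic mean drift are genuinely lower order than the target $(nq)^{\alpha+1/2}$ uniformly over the grid, and verifying that the density condition imposed by $\alpha$ is precisely what lets the union bound over $\sim n^2q$ events survive.
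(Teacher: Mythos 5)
Your proposal is correct and follows essentially the same route as the paper: reduce the spectral norm of the diagonal matrix to $\max_i|(d_p)_i-\mathbb{E}(d_p)_i|$, apply a Bernstein-type tail bound plus a union bound at fixed $p$, discretize $[q,2q]$ and control the increments via the coupling $D_p-D_{p_k}\overset{d}{=}D_{p-p_k}$ with monotonicity, and use $(nq)^{\alpha}\geq\log n$ to absorb the polynomial union-bound factors. The only (immaterial) differences are your coarser grid of mesh $1/n$ with Poisson-tail control of the Binomial$(n-1,1/n)$ increments, versus the paper's finer mesh $1/(n^2q)$, and your observation that Bernstein actually yields the sharper exponent $(nq)^{2\alpha}$.
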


\begin{remark}
    In this paper, we do not discuss whether $\frac{(\log n)^{\frac{1}{\alpha}}}{n} $ is less than $\frac{1}{2}$ or $1$. To avoid this issue, we assume that $n$ is sufficiently large for this condition to hold.
\end{remark}

\begin{proof}
    As mentioned in the (\ref{nonhonoberin}),
    $$
    \left|(d_p)_i-\mathbb{E}(d_p)_i\right|\leq Lr\sqrt{np}
    $$
    with probability at least $1-e^{-Lr}$ for any $r\geq 0$. Hence,
    $$
    \left\|(D_p)-\mathbb{E}(D_p)\right\|\leq Lr\sqrt{np}
    $$
    with probability at least $1-ne^{-Lr}$ for any $r\geq 0$. 

Let $r=L\cdot(np)^{\alpha}$, where L is a constant large enough. Then we have
    $$
    \left\|(D_p)-\mathbb{E}(D_p)\right\|\leq L\cdot(np)^{\alpha+1/2}
    $$
    with probability at least $1-e^{-L\cdot(np)^\alpha+\log n}\geq 1-e^{-L\cdot(np)^\alpha}$.
    

    Let now $q \geq (\log n)^{\frac{1}{\alpha}} / n$ and for $i=0,1, \ldots,\lceil (n q)^2\rceil$, define $p_i=q+\frac{i}{n^2 q^2}$. Combined with $D_p-D_{p_i} \stackrel{d}{=} D_{p-p_i}$ for $p>p_i$ and the monotonicity of the operator norm of the matrix with non-negative entries with respect to each entry, we can deduce that
    $$
\begin{aligned}
    & \sup _{p \in\left[p_i, p_{i+1}\right]}\left(\left\|D_p-\mathbb{E} D_p\right\|-\left\|D_{p_i}-\mathbb{E} D_{p_i}\right\|\right) \\
    & \quad\leq\left\|D_{\frac{1}{n^2q}}-\mathbb{E}D_{\frac{1}{n^2q}}\right\|+\left\|\mathbb{E}D_{\frac{1}{n^2q}}\right\|+1\\
    & \quad\leq L\sqrt{nq},
\end{aligned}
$$
with probability at least $1-ne^{-Lnq}\geq 1-e^{-Lnq}$, where we employed $q\geq \log n/n$ and assumed that the constant in the last line is large enough.

Thus with probability at least $1-(nq)^2e^{-Lnq}-(nq)^2e^{-L\cdot(nq)^\alpha}\geq 1-Le^{-L(\alpha)(nq)^\alpha}$, one has
$$
\begin{aligned}
   \sup _{p \in[q, 2 q]}\left\|D_p-\mathbb{E} D_p\right\| & \leq \max _{i \in\{0, \ldots,[(n q)^2]\}}\left\|D_{p_i}-\mathbb{E} D_{p_i}\right\|+L\sqrt{nq} \\
   &\leq L\cdot(nq)^{\alpha+1/2}+L\sqrt{nq}\leq L\cdot(nq)^{\alpha+1/2}.
\end{aligned}
$$

\end{proof}

An uniform bound over $p$ of the summation of regularized squared degrees can immediately obtained in the following : 

\begin{lemma}\label{control T}
For any $r\geq 1$, $q'\in [1/n,1/2]$ and any number $\tau\geq0$, we have
    $$
    \begin{aligned}
        \sup_{p\in[q,2q]}\sum_{i=1}^{n}((d_p)_i+\tau)^2\leq Lr^2n^{\frac{3}{2}}q^{\frac{1}{2}}(2nq+\tau)^2
    \end{aligned}
    $$with probability at least $1-Le^{-L\cdot(nq')^{\frac{1}{4}}r}$.
\end{lemma}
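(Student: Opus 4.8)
The plan is to split $\sum_{i=1}^{n}((d_p)_i+\tau)^2$ into a genuinely random piece and a deterministic one, and to exploit the fact that the claimed bound is loose enough that crude estimates suffice for each. Writing $\mathbb{E}d_p$ for the vector of expected degrees and using the $\ell_2$ triangle inequality, for every $p\in[q,2q]$ one has
\[
\Big(\sum_{i=1}^{n}((d_p)_i+\tau)^2\Big)^{1/2} = \|d_p+\tau\bar 1\|_2 \le \|d_p-\mathbb{E}d_p\|_2 + \|\mathbb{E}d_p+\tau\bar 1\|_2,
\]
so that $\sum_{i}((d_p)_i+\tau)^2 \le 2\|d_p-\mathbb{E}d_p\|_2^2 + 2\sum_{i}(\mathbb{E}(d_p)_i+\tau)^2$. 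The second sum is deterministic: since $\mathbb{E}(d_p)_i=(n-1)p\le 2nq$, it is at most $2n(2nq+\tau)^2$, which fits the target because $2n\le Cr^2 n^{3/2}q^{1/2}$ whenever $nq\ge 1$ and $r\ge 1$. Everything then reduces to a uniform control of $\sup_{p\in[q,2q]}\|d_p-\mathbb{E}d_p\|_2^2$.

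For that random part I would invoke Lemma \ref{lem:lemma2.2}, but run with a \emph{boosted} deviation parameter so as to upgrade its probability $1-nq'e^{-r}$ to the required $1-Le^{-L(nq')^{1/4}r}$. Taking $q'=q$ (the statement uses $q$ and $q'$ interchangeably) and applying Lemma \ref{lem:lemma2.2} with $\tilde r:=(nq)^{1/4}r$, which is $\ge 1$ since $nq\ge 1$ and $r\ge 1$, yields
\[
\sup_{p\in[q,2q]}\|d_p-\mathbb{E}d_p\|_2 \le Ln\sqrt{q}\,(nq)^{1/4}r \quad\text{with probability at least } 1-nq\,e^{-(nq)^{1/4}r}.
\]
Squaring gives $\sup_{p\in[q,2q]}\|d_p-\mathbb{E}d_p\|_2^2\le L^2 n^{5/2}q^{3/2}r^2$. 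The point is that inflating the parameter does no harm: the target carries a factor of at least $(2nq)^2=4n^2q^2$ inside $(2nq+\tau)^2$, and $n^{5/2}q^{3/2}\le n^{3/2}q^{1/2}(2nq)^2$ up to constants precisely because $nq\ge 1$, so the enlarged deviation still drops below the claimed bound after absorbing constants (legitimate since $nq\ge 1$ forces the required $L^2\le Cnq$).

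The one delicate point, and the step I expect to be the main obstacle, is the probability bookkeeping, namely showing $nq\,e^{-(nq)^{1/4}r}\le Le^{-L(nq)^{1/4}r}$; this is exactly where the exponent $(nq')^{1/4}$ is born. Setting $u:=(nq)^{1/4}\ge 1$, the claim is $u^4e^{-ur}\le Le^{-\frac12 ur}$, which follows from the elementary fact $\sup_{u\ge 1}u^4e^{-u/2}<\infty$ together with $r\ge 1$. The reason one can afford to pay the polynomial union-bound factor $nq$ at the cost of only a constant (after halving the exponent) is that the tail underlying \eqref{nonhonoberin} is a $\psi_{1/2}$-tail, decaying merely like $\exp(-\sqrt{\,\cdot\,})$, so a polynomial prefactor is cheap against an exponent of order $(nq)^{1/4}r$. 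Combining the two pieces on the intersection of these events and absorbing constants then gives $\sup_{p\in[q,2q]}\sum_{i}((d_p)_i+\tau)^2\le Cr^2 n^{3/2}q^{1/2}(2nq+\tau)^2$; the norm estimates themselves are routine once Lemma \ref{lem:lemma2.2} is in hand.
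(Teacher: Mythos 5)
Your proposal is correct and follows essentially the same route as the paper: the identical triangle-inequality split into $2\|d_p-\mathbb{E}d_p\|_2^2+2n(2nq+\tau)^2$, the same invocation of Lemma \ref{lem:lemma2.2} with the deviation parameter boosted to $(nq)^{1/4}r$, and the same absorption of the polynomial prefactor $nq$ into the exponential to reach the stated probability. Your write-up is in fact slightly more careful than the paper's about the bookkeeping (the $u^4e^{-ur}\le Le^{-ur/2}$ step and the $q$ versus $q'$ notational slip), but there is no substantive difference in approach.
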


Lemma \ref{control T} is crucial for controlling the bias of the degree matrix in our subsequent main results. It is evident that, in light of Lemma \ref{lem:lemma2.2}, we can set \( \tau = 0 \) in this case.

\begin{proof}
    When the event in Lemma \ref{lem:lemma2.2} occurs,
    $$
    \begin{aligned}
        \sup_{p\in[q,2q]}\sum_{i=1}^{n}((d_p)_i+\tau)^2&\leq \sup_{p\in[q,2q]}2\left\|d_p-\mathbb{E}d_p\right\|^2 + 2n(2nq+\tau)^2\\
        &\leq Lr^2n^2q + 2n(2nq+\tau)^2\\
        &\leq Lr^2n(2nq+\tau)^2.
    \end{aligned}
    $$

Thus, with probability at least$1-Lnq'e^{-2(nq')^{\frac{1}{4}}r}\geq 1-Le^{-L\cdot(nq')^{\frac{1}{4}}r}$, one has
    $$
   \begin{aligned}
        \sup_{p\in[q,2q]}\sum_{i=1}^{n}((d_p)_i+\tau)^2\leq Lr^2n^{\frac{3}{2}}q^{\frac{1}{2}}(2nq+\tau)^2.
    \end{aligned}
    $$
\end{proof}

So far, we have discussed the concentration and upper bounds of the necessary quantities we need. We are now poised to present the concentration of the spectral norm of the Laplacian matrix.

\begin{lemma}[Concentration of the spectral norm of Laplacian matrix]\label{Concentration of the spectral norm of Laplacian matrix}
    Consider random graphs from the homogeneous Erd\"o-Re\'enyi models. Then for any $r\geq 1$, $\tau>0$ and $q\in [2\log n/n,1/2]$,
    $$
    \begin{aligned}
        \sup_{p\in [q,2q]}\left\|\mathcal{L}(A_{p_{\tau}})-\mathcal{L}(\mathbb{E}(A_{p_\tau}))\right\|\leq L\frac{nq}{\tau}(\frac{nq}{\tau}+1)^2r^2
    \end{aligned}
    $$
    with probability at least $1-Le^{-L\cdot(nq)^{\frac{1}{4}}r}$.
\end{lemma}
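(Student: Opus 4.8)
The plan is to reuse verbatim the two-term decomposition from the proof of Theorem \ref{chth51lvdiowbe} and to make each term uniform in $p$ by feeding in the uniform degree and adjacency estimates (Lemmas \ref{lem:concentration}--\ref{control T}) already established. For every fixed $p\in[q,2q]$ I would set $\mathcal N:=I-D_{\tau}^{-1/2}\bar A_{p_\tau}D_{\tau}^{-1/2}$ and write $\mathcal L(A_{p_\tau})-\mathcal L(\bar A_{p_\tau})=B_1+B_2$, where $B_1=\mathcal L(A_{p_\tau})-\mathcal N=D_\tau^{-1/2}(\bar A_{p}-A_{p})D_\tau^{-1/2}$ and $B_2=\mathcal N-\mathcal L(\bar A_{p_\tau})$. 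The key structural observation is that the supremum over $p$ costs nothing beyond what is already absorbed into the uniform lemmas, so the entire argument runs on the intersection of their events.

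For $B_1$, since every diagonal entry of $D_\tau$ is at least $\tau$, one has $\|B_1\|\le \tau^{-1}\|A_p-\bar A_p\|$, and the uniform adjacency bound (Lemma \ref{lem:concentration}) gives $\sup_{p\in[q,2q]}\|B_1\|\le L\tau^{-1}r\sqrt{nq}$ on an event of probability at least $1-Le^{-npr^2}$. Because $nq\ge 1$ and $r\ge1$, this is dominated by $L\frac{nq}{\tau}(\frac{nq}{\tau}+1)^2r^2$, and its failure probability is far smaller than the target one, so $B_1$ is not the bottleneck.

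The crux is $B_2$. Here I would bound $\|B_2\|\le\|B_2\|_{HS}$ and expand entrywise, mirroring the Hilbert--Schmidt estimate of $H^{(1)},H^{(2)}$ in the proof of Theorem \ref{chth51lvdiowbe}, using $(B_2)_{ij}=(\bar A_{ij}+\tfrac\tau n)(u_iu_j-v_iv_j)$ with $u_i=(\bar d_i+\tau)^{-1/2}$ and $v_i=(d_i+\tau)^{-1/2}$. The point that produces the correct $\tau^{-1}$ (rather than the lossy $\tau^{-1/2}$) scaling is the Lipschitz estimate $|u_iu_j-v_iv_j|\le \tfrac{1}{2\tau^{2}}(|d_i-\bar d_i|+|d_j-\bar d_j|)$, obtained from $|\sqrt{a}-\sqrt{b}|=|a-b|/(\sqrt a+\sqrt b)$ together with $d_i+\tau,\bar d_i+\tau\ge\tau$. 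Squaring, summing the deterministic factor over $j$ via $\sum_j(\bar A_{ij}+\tfrac\tau n)^2\le (p+\tfrac\tau n)(\bar d_i+\tau)$ and the row-sum identity $\sum_j(\bar A_{ij}+\tfrac\tau n)=\bar d_i+\tau$, and using homogeneity ($\bar d_i\equiv (n-1)p$), I obtain $\|B_2\|^2\le \frac{L(nq+\tau)^2}{n\tau^4}\,\|d_p-\bar d_p\|_2^2$. Reading off the probability from Lemma \ref{control T} --- equivalently, running the uniform degree estimate of Lemma \ref{lem:lemma2.2} with $r$ replaced by $(nq)^{1/4}r$, which is precisely what inflates $\|d_p-\bar d_p\|_2^2$ by the factor $(nq)^{1/2}$ appearing in Lemma \ref{control T} --- yields $\sup_{p}\|B_2\|\le \frac{L(nq)^{3/4}(nq+\tau)}{\tau^2}r$ on an event of probability at least $1-Le^{-L(nq)^{1/4}r}$. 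A direct comparison (the ratio to the target equals $(nq)^{1/4}\tau^{-1}(nq+\tau)r\ge1$) shows this is bounded by $L\frac{nq}{\tau}(\frac{nq}{\tau}+1)^2r^2$ in every regime of $\tau$.

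Finally I would combine the two estimates by the triangle inequality on the common event, whose probability is governed by the weaker of the two, namely Lemma \ref{control T}'s $1-Le^{-L(nq)^{1/4}r}$. The main obstacle is entirely in the $B_2$ term: one must extract the sharp first-order ($\tau^{-1}$) dependence uniformly in $p$ --- the crude spectral bound $\|\bar D_\tau^{1/2}D_\tau^{-1/2}-I\|\le(\|D_p-\bar D_p\|/\tau)^{1/2}$ only delivers $\tau^{-1/2}$ and would actually violate the target for large $\tau$ --- while simultaneously paying no more than the $e^{-L(nq)^{1/4}r}$ probability. It is this last constraint that forces the $r\mapsto(nq)^{1/4}r$ rescaling of the degree control, and hence the reliance on the uniform square-sum bound of Lemma \ref{control T} rather than on a pointwise Bernstein estimate.
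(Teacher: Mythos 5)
Your proposal is correct and follows essentially the same route as the paper: the identical two-term decomposition (your $B_1+B_2$ is the paper's $S_p+T_p$ up to sign), the same treatment of the adjacency part via Lemma \ref{lem:concentration} together with $\|D_{p_\tau}^{-1}\|\le\tau^{-1}$, and the same Hilbert--Schmidt estimate of the degree part fed by the uniform degree bound of Lemma \ref{lem:lemma2.2} with $r\mapsto (nq)^{1/4}r$. The only (minor, and in fact slightly sharper) deviation is your Lipschitz bound $|u_iu_j-v_iv_j|\le\frac{1}{2\tau^{2}}\left(|d_i-\bar d_i|+|d_j-\bar d_j|\right)$, which avoids the random factor $(d_p)_i+\tau$ produced by the paper's expansion of $(\delta_p)_{ij}-(\bar\delta_p)_{ij}$ and therefore makes the squared-degree-sum bound of Lemma \ref{control T} unnecessary for the estimate itself.
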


\begin{remark}
Our argument avoids using the delocalization of the eigenvector corresponding to the largest eigenvalue of the matrix, as required in Lugosi et al. \cite{LM20BER} to obtain uniform results for the adjacency matrix.

It is meaningful to achieve uniform concentration properties when \( p \sim \frac{\log n}{n} \). In our argument, to realize this goal, performing regularization in Lemma \ref{Concentration of the spectral norm of Laplacian matrix} is necessary, and therefore the parameter \( \tau \) cannot be omitted. By avoiding the reliance on Theorem \ref{chth51lvdiowbe}, we can obtain a concentration result independent of \( \tau \), as will be shown later. However, in this case, we cannot achieve \( p \sim \frac{\log n}{n} \). This is because the additional \(\log n\) term arising when handling the uniform upper bound for the degree matrix significantly increases the complexity of the problem. Further exploration of the effects of the regularization term on uniform concentration properties would be valuable.
\end{remark}


\begin{proof}
 
Let \( D_{p_{\tau}} = \operatorname{diag}((d_p)_i + \tau) \) and \( \bar{D}_{p_{ \tau}} = \operatorname{diag}((\bar{d}_p)_i + \tau) \). We decompose the deviation in a manner analogous to that presented in Theorem \ref{chth51lvdiowbe},
    $$
    \begin{aligned}
    \mathcal{L}(A_{p_\tau})-\mathcal{L}(\mathbb{E}(A_{p_\tau}))&=
    D_{p_{\tau}}^{-1/2}(A_p-\mathbb{E}A_p)D_{p_{\tau}}^{-1/2}+(D_{p_{\tau}}^{-1/2}\mathbb{E}(A_p)_{\tau}D_{p_{\tau}}^{-1/2}\\&\quad-\bar{D}_{p_{\tau}}^{-1/2} A_{p_\tau} \bar{D}_{p_{\tau}}^{-1/2})\\
    &=:S_p+T_p,
    \end{aligned}
    $$
    where $S_p=D_{p_{\tau}}^{-1/2}(A_p-\mathbb{E}A_p)D_{p_{\tau}}^{-1/2}$ and $T_p=\bar{D}_{p_{\tau}}^{-1/2}A_{p_\tau}\bar{D}_{p_{\tau}}^{-1/2}$. We will control $S_p$ and $T_p$ separately.

    \textbf{Bounding $S_p$.} By Lemma \ref{lem:concentration}, we know that
    $$
    \sup _{p \in[q, 2 q]}\left\|A_p-\mathbb{E} A_p\right\|\leq L r\sqrt{n q}
    $$
    with porbability at least $1-Le^{-nqr^2}$.

    By triangle inequality,
    $$
    \sup _{p \in[q, 2 q]}\left\|S_p\right\|\leq \sup _{p \in[q, 2 q]}\left\|A_p-\mathbb{E} A_p\right\|\left\|D_{p_{\tau}}\right\|^{-1}\leq L r\frac{\sqrt{n q}}{\tau},
    $$
    with porbability at least $1-Le^{-nqr^2}$.

    \textbf{Bounding $T_p$.}
    \begin{equation}\label{Tp 1}
    \begin{aligned}
    \left\|T_p\right\|^2\leq \left\|T_p\right\|_{HS}^2=\sum_{i,j=1}^{n}(T_p)_{ij}^2.    
    \end{aligned}
    \end{equation}

    Here $\left\|\cdot\right\|_{HS}$ means Hilbert-Schmidt norm. Set $(\delta_{p})_{ij}=((d_p)_i+\tau)((d_p)_j+\tau)$ and $(\bar{\delta}_p)_{ij}=((\bar{d}_p)_i)+\tau)((\bar{d}_p)_j)+\tau)$, then we have
    \begin{equation}\label{Tp 2}
        \begin{aligned}
         (T_p)_{ij}=[(\bar{A}_{p})_{ij}+\tau/n]\left(1/\sqrt{(\delta_{p})_{ij}}-1/\sqrt{(\bar{\delta}_p)_{ij}}\right).   
        \end{aligned}
    \end{equation}
    
    Note that 
    $$
    0\leq (\bar{A}_{p})_{ij}+\tau/n\leq \frac{2nq+\tau}{n}, $$
and
$$ \left|\frac{(\delta_{p})_{ij}-(\bar{\delta}_p)_{ij}}{(\delta_{p})_{ij}\sqrt{(\bar{\delta}_p)_{ij}}+((\bar{\delta}_p)_{ij}\sqrt{(\delta_{p})_{ij}}}\right|\geq \frac{\left|(\delta_{p})_{ij}-(\bar{\delta}_p)_{ij}\right|}{2\tau^3}.
    $$
    Naturally, we can rewtite $(\delta_{p})_{ij}-(\bar{\delta}_p)_{ij}$ as
    $$
    ((d_p)_{i}+\tau)((d_p)_{j}-(\bar{d}_p)_{j})+((\bar{d}_p)_{j}+\tau)((d_p)_{i}-(\bar{d}_p)_{i}).
    $$
    So using the ineqality $(a+b)^2\leq 2(a^2+b^2)$ and $(\bar{d}_i)\leq 2nq$, we obatian

\[
\left\|T_p\right\|^2 \leq \frac{(2nq+\tau)^2}{n^2\tau^6} \Bigg[
\sum_{i=1}^{n}((d_p)_{i}+\tau)^2 \sum_{j=1}^{n}((d_p)_{j}-(\bar{d}_p)_{j})^2 
+ n(2nq+\tau)^2 \sum_{i=1}^{n}((d_p)_{i}-(\bar{d}_p)_{i})^2
\Bigg].
\]

    By Lemma \ref{lem:lemma2.2} and Lemma \ref{control T}, 
    $$
    \sup_{p\in \left[q,2q\right]}\left\|d_p-\bar{d}_p\right\|\leq Ln^{\frac{5}{4}}q^{\frac{3}{4}}r, $$
and

$$ \sup_{p\in[q,2q]}\sum_{i=1}^{n}((d_p)_i+\tau)^2\leq Lr^2n^{\frac{3}{2}}q^{\frac{1}{2}}(2nq+\tau)^2
    $$
    with probability at least $1-Le^{-L\cdot(nq)^{\frac{1}{4}}r}$.
    By elementary calculation, it is easy to obtain
    $$
    \sup_{p\in[q,2q]}\left\|T_p\right\|\leq L\frac{nq}{\tau}(\frac{nq}{\tau}+1)^2r^2
    $$
    with probability at least $1-Le^{-L\cdot(nq)^{\frac{1}{4}}r}$.

    \textbf{Summary.}
    $$
    \begin{aligned}
        \sup_{p\in [q,2q]}\left\|\mathcal{L}(A_{p_\tau})-\mathcal{L}(\mathbb{E}(A_{p_\tau}))\right\|&\leq \sup _{p \in[q, 2 q]}\left\|S_p\right\|+\sup_{p\in[q,2q]}\left\|T_p\right\|\\
        &\leq L r\frac{\sqrt{n q}}{\tau}+L\frac{nq}{\tau}(\frac{nq}{\tau}+1)^2r^2\\
        &\leq L\frac{nq}{\tau}(\frac{nq}{\tau}+1)^2r^2.
    \end{aligned}
    $$
    with probability at least $1-Le^{-nqr^2}-Le^{-L\cdot(nq)^{\frac{1}{4}}r}\geq 1-Le^{-L\cdot(nq)^{\frac{1}{4}}r}$.
\end{proof}

\begin{theorem}\label{them 1}
    For any $r\geq 1$ and $p_0\in [2\log n/n,1]$,
    $$
    \begin{aligned}
        \sup_{p\in [p_0,1]}\left\|\mathcal{L}(A_{p_\tau})-\mathcal{L}(\mathbb{E}(A_{p_\tau}))\right\|\leq L\frac{np}{\tau}(\frac{np}{\tau}+1)^2r^2
    \end{aligned}
    $$
    with probability at least $1-Le^{-L\cdot(np_0)^{\frac{1}{4}}r}$.
\end{theorem}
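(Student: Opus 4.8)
The plan is to upgrade the single dyadic-block estimate of Lemma \ref{Concentration of the spectral norm of Laplacian matrix} to the full range $[p_0,1]$ by a dyadic peeling argument combined with a union bound. First I would cover the interval by dyadic blocks: set $q_k = 2^k p_0$ for $k = 0,1,\ldots,K$ with $K = \lceil \log_2(1/p_0)\rceil$, so that $[p_0,1] \subseteq \bigcup_{k=0}^{K} [q_k, 2q_k]$; the topmost block is taken with $q = 1/2$ to cover $[1/2,1]$ without violating the constraint $q \le 1/2$ in Lemma \ref{Concentration of the spectral norm of Laplacian matrix}. Each $q_k$ lies in $[p_0, 1/2] \subseteq [2\log n/n, 1/2]$, so the lemma applies on every block.

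On the block $[q_k, 2q_k]$, Lemma \ref{Concentration of the spectral norm of Laplacian matrix} gives, with probability at least $1 - Le^{-L(nq_k)^{1/4} r}$,
\[
\sup_{p \in [q_k, 2q_k]} \left\|\mathcal{L}(A_{p_\tau}) - \mathcal{L}(\mathbb{E}(A_{p_\tau}))\right\| \le L\,\frac{nq_k}{\tau}\Bigl(\frac{nq_k}{\tau}+1\Bigr)^2 r^2.
\]
The crucial observation is a monotonicity within each block: for every $p \in [q_k, 2q_k]$ one has $q_k \le p$, hence $\frac{nq_k}{\tau} \le \frac{np}{\tau}$, and the right-hand side is dominated by $L\frac{np}{\tau}\bigl(\frac{np}{\tau}+1\bigr)^2 r^2$, which is exactly the target bound evaluated at $p$. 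Thus on the good event for block $k$, the theorem's inequality holds uniformly over that block. I would then take a union bound over the $K+1$ blocks; since $q_k \ge p_0$ for all $k$, each block's failure probability is at most $Le^{-L(np_0)^{1/4} r}$, so the total failure probability is at most $(K+1)\,Le^{-L(np_0)^{1/4} r}$.

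The main obstacle---and the only genuinely technical point---is absorbing the factor $K+1$ into the exponential tail. Because $p_0 \ge 2\log n/n$, we have $1/p_0 \le n/(2\log n)$, whence $K+1 \le \log_2(1/p_0)+2 \lesssim \log n$. Writing $\log n = e^{\log\log n}$, it suffices to verify $\log\log n \le c\,(np_0)^{1/4} r$ for a suitable constant $c$; this follows from $np_0 \ge 2\log n$, which yields $(np_0)^{1/4} \ge (2\log n)^{1/4}$, together with $r \ge 1$ and the elementary fact that $(\log n)^{1/4}$ dominates $\log\log n$ for $n$ large. Consequently $(K+1)Le^{-L(np_0)^{1/4}r} \le Le^{-L'(np_0)^{1/4}r}$ after a harmless reduction of the constant in the exponent, which delivers the stated probability bound and completes the proof. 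I expect no difficulty in the deterministic bound transfer; the entire subtlety is in showing that the logarithmic number of blocks costs nothing beyond a constant in the exponent, which is precisely why the regime $p_0 \ge 2\log n/n$ is imposed.
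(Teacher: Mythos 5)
Your proof is correct and follows essentially the same route as the paper: a dyadic cover of $[p_0,1]$ by blocks $[2^k p_0, 2^{k+1}p_0]$, Lemma \ref{Concentration of the spectral norm of Laplacian matrix} applied on each block with the monotonicity of the bound in $p$, and a union bound. The only cosmetic difference is that the paper controls the total failure probability by summing the series $\sum_{i\ge 0} Le^{-L(n2^ip_0)^{1/4}r}$, which is dominated by its first term, whereas you bound each term by the worst one and then absorb the resulting $\log n$ factor into the exponent using $np_0\ge 2\log n$; both work.
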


\begin{remark}
The Theorem \ref{them 1} indicates that during the construction of the Erdős–Rényi graph, as we add new edges, the deviation of the spectral norm \(\|A_p\|\) from its expected value \(\mathbb{E}\|A_p\|\) (i.e., \(|\|A_p\| - \mathbb{E}\|A_p\||\)) will be, up to a constant factor, no larger than the deviation in a fixed random graph (for example, \(G(n, \frac{1}{2})\)).
\end{remark}

\begin{proof}
    If  $p_0\geq 1/2$, the conclusion can be directly drawn from Lemma \ref{Concentration of the spectral norm of Laplacian matrix}.

    If $p_0<1/2$, let $k_0$ be the largest number such that $2^{k_0}p_0\leq 1$, then through Lemma \ref{Concentration of the spectral norm of Laplacian matrix} we obtain
    $$
    \begin{aligned}
        &\mathbb{P}\left\{\sup_{p\in [p_0,1]}\left\|\mathcal{L}(A_{p_\tau})-\mathcal{L}(\mathbb{E}(A_{p_\tau}))\right\|>L\frac{np}{\tau}(\frac{np}{\tau}+1)^2r^2\right\}\\
        &\quad\leq\sum_{i=0}^{k_0}\mathbb{P}\left\{ \sup_{p\in [2^ip_0,\min\{2^{i+1}p_0,1\}]}\left\|\mathcal{L}(A_{p_\tau})-\mathcal{L}(\mathbb{E}(A_{p_\tau}))\right\|>L\frac{n2^ip_0}{\tau}(\frac{n2^ip_0}{\tau}+1)^2r^2\right\}\\
        &\quad\leq \sum_{i=0}^{\infty}Le^{-L\cdot(n2^ip_0)^{\frac{1}{4}}r}\leq Le^{-L\cdot(np_0)^{\frac{1}{4}}r}.
    \end{aligned}
    $$
\end{proof}

\subsection{$\tau$ independent uniform bound for Laplacian matrix  }\label{suec3.2}\

\

We next turn our goal to obtaining a concentration result that does not rely on the regularization term $\tau$. As shown in the following, the result of Theorem \ref{for kait} can be independent of $\tau$ if we set $\tau=0$, and it is tighter compared to Theorem \ref{them 1}. However, the trade-off is a narrower range of values for $p$.

\begin{lemma}\label{Concentration' of the spectral norm of Laplacian matrix}
For any $0<\alpha<1/2$, $0<\beta<1/4$, $r\geq 1$, $\tau \geq 0$ and any $q\in [\frac{(\log n)^{\frac{1}{\alpha}}}{n},1/2]$, with probability at least $1-Le^{-L\cdot(np)^\alpha}-L(\beta)e^{-(nq)^\beta r}$ one has
$$
\begin{aligned}
\sup_{p\in [q,2q]}\left|\mathcal{L}(A_{p_\tau})-\mathcal{L}(\mathbb{E}(A_{p_\tau}))\right|\leq L(\alpha)\frac{r^2}{\sqrt{L(\alpha)(nq)^{1-4\beta}+\frac{\tau}{nq}}}.
\end{aligned}
$$

Specifically, when $\tau = 0$, we have
$$
\begin{aligned}
\sup_{p\in [q,2q]}\left|\mathcal{L}(A_{p})-\mathcal{L}(\mathbb{E}(A_{p}))\right|\leq L(\alpha)\frac{r^2}{\sqrt{L(\alpha)(nq)^{1-4\beta}}}.
\end{aligned}
$$

\end{lemma}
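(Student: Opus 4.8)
The plan is to reuse the decomposition from the proof of Lemma~\ref{Concentration of the spectral norm of Laplacian matrix}, namely $\mathcal{L}(A_{p_\tau})-\mathcal{L}(\mathbb{E}(A_{p_\tau}))=S_p+T_p$ with $S_p=D_{p_\tau}^{-1/2}(A_p-\mathbb{E}A_p)D_{p_\tau}^{-1/2}$ and $T_p$ the degree-deviation remainder, and to control both terms uniformly over $p\in[q,2q]$. The single new ingredient, which is what removes the dependence on $\tau$ and sharpens the rate, is a genuine uniform lower bound on the degrees: instead of the crude $(d_p)_i+\tau\ge\tau$ used before, I would invoke the sharp operator-norm estimate of Lemma~\ref{Concentration of degrees}.

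Concretely, on the event of Lemma~\ref{Concentration of degrees} (probability at least $1-Le^{-L(np)^\alpha}$) one has $\sup_{p\in[q,2q]}\|D_p-\mathbb{E}D_p\|\le L(nq)^{\alpha+1/2}$; since $\alpha<1/2$ this deviation is $o(nq)$ while $\mathbb{E}(d_p)_i=(n-1)p\asymp nq$, so simultaneously $\tfrac12 nq\le (d_p)_i\le 3nq$ for every $i$ and every $p\in[q,2q]$. This one event yields two things I need: the lower bound $(d_p)_i+\tau\ge\tfrac12 nq+\tau$, which will replace each factor of $\tau$ in the denominators of $T_p$ by $nq+\tau$, and the upper estimate $\sum_i((d_p)_i+\tau)^2\le Cn(nq+\tau)^2$. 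This is exactly the step that forces the narrower density threshold $q\ge(\log n)^{1/\alpha}/n$; note that I would deliberately avoid the looser Lemma~\ref{control T} here, whose extra $(nq)^{1/2}$ factor would overshoot the target rate. With this envelope, $\|S_p\|\le\|A_p-\mathbb{E}A_p\|/\min_i((d_p)_i+\tau)\lesssim r\sqrt{nq}/(nq+\tau)$ by Lemma~\ref{lem:concentration}, a term I will verify is dominated by the target.

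For $T_p$ I would pass to the Hilbert--Schmidt norm, write $(T_p)_{ij}=[(\bar A_p)_{ij}+\tau/n]\big(1/\sqrt{(\delta_p)_{ij}}-1/\sqrt{(\bar\delta_p)_{ij}}\big)$, and rationalize to get $|1/\sqrt{\delta}-1/\sqrt{\bar\delta}|=|\delta-\bar\delta|/(\delta\sqrt{\bar\delta}+\bar\delta\sqrt{\delta})$. The degree lower bound makes the denominator $\gtrsim(nq+\tau)^3$, while expanding $\delta_p-\bar\delta_p$ and using $(a+b)^2\le 2(a^2+b^2)$ together with the sum-of-squares estimate bounds $\sum_{ij}|\delta_p-\bar\delta_p|^2$ by $Cn(nq+\tau)^2\|d_p-\bar d_p\|_2^2$. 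The residual $\ell_2$ degree deviation is supplied by Lemma~\ref{lem:lemma2.2} \emph{applied at confidence level $(nq)^\beta r$ in place of $r$}, giving $\sup_p\|d_p-\bar d_p\|_2\le Ln\sqrt q\,(nq)^\beta r$; this is where $\beta$ and the tail $e^{-(nq)^\beta r}$ enter. Collecting factors gives $\|T_p\|\lesssim \sqrt{nq}\,(nq)^\beta r/(nq+\tau)$, and a short case split ($\tau\lesssim(nq)^{2-4\beta}$ versus $\tau\gtrsim(nq)^{2-4\beta}$, using AM--GM and $2\beta<1$) shows this is at most $L(\alpha)r^2/\sqrt{(nq)^{1-4\beta}+\tau/(nq)}$; the slack between $r$ and $r^2$ and between $\beta$ and $2\beta$ comfortably absorbs all constants and the $S_p$ contribution. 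Setting $\tau=0$ gives the special case, and a union bound over the three events produces the stated probability.

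The hard part is conceptual rather than computational: it is recognizing that the $\tau$-free, sharper bound is bought precisely by the uniform degree lower bound, which is only available in the denser regime $q\ge(\log n)^{1/\alpha}/n$ --- hence the trade-off advertised in the text. A secondary delicate point is the confidence bookkeeping: applying Lemma~\ref{lem:lemma2.2} at level $(nq)^\beta r$ introduces a union-bound prefactor $nq$ that must be absorbed into the exponential, which is legitimate because $(nq)^\beta\gg\log(nq)$ once $n$ is large; this is the origin of the constant $L(\beta)$. Finally, the restriction $\beta<1/4$ is what keeps the exponent $1-4\beta$ positive, so that the right-hand side genuinely decays and the statement is a concentration inequality rather than a vacuous bound.
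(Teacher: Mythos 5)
Your proposal follows essentially the same route as the paper's own proof: the identical $S_p+T_p$ decomposition, the uniform degree lower bound from Lemma \ref{Concentration of degrees} replacing the crude $(d_p)_i+\tau\ge\tau$ estimate, the Hilbert--Schmidt/rationalization bound on $T_p$ fed by Lemma \ref{lem:lemma2.2} at confidence level $(nq)^\beta r$, and the same absorption of the $nq$ union-bound prefactor into $L(\beta)$. Your final form of the $T_p$ bound ($r(nq)^{1/2+\beta}/(nq+\tau)$ versus the paper's $r^2(nq)^{2\beta}/\sqrt{L(\alpha)nq+\tau}$) differs cosmetically but is verified correctly against the stated target, so this is a faithful reconstruction.
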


\begin{proof}
    By Lemma \ref{Concentration of degrees}, we have
    \begin{equation}\label{lem 3.3 event}
    \begin{aligned}
    \mathbb{P}\left\{L(\alpha)np\leq (d_p)_i \leq L(\alpha)np \text{ for }\forall i\in [n] \text{ and } \forall p\in [q,2q]\right\}\geq 1-Le^{-L\cdot(np)^\alpha}.
    \end{aligned}
     \end{equation}
    Upon the occurrence of this event, we can  further control $S_p$ and $T_p$ in the proof of Lemma \ref{Concentration of the spectral norm of Laplacian matrix}. This extend our discussion to include the case where $\tau = 0$. Therefore, we proceed with the following analysis on the premise that this event holds true.

    \textbf{Bounding $S_p$.}
    $$
    \sup _{p \in[q, 2 q]}\left\|S_p\right\|\leq \sup _{p \in[q, 2 q]}\left\|A_p-\mathbb{E} A_p\right\|\left\|D_{p_{\tau}}\right\|^{-1}\leq L r\frac{\sqrt{n q}}{L(\alpha)nq+\tau},
    $$
    with porbability at least $1-Le^{-L\cdot(np)^\alpha}-Le^{-nqr^2}$.

    \textbf{Bounding $T_p$.}
    Combing (\ref{Tp 1}) and (\ref{Tp 2}),

\begin{align*}
\left\|T_p\right\|^2 &\leq \sum_{i,j=1}^{n}(T_p)_{ij}^2 \nonumber \\
&= \sum_{i,j=1}^{n}\left[(\mathbb{E}(A_{p})_{ij}+\frac{\tau}{n})\left(\frac{1}{\sqrt{(\delta_{p})_{ij}}}-\frac{1}{\sqrt{(\bar{\delta}_p)_{ij}}}\right)\right]^2.
\end{align*}

    Moreover, for all $p\in [q,2q]$ one has
     $$
    0\leq \mathbb{E}(A_{p})_{ij}+\tau/n\leq \frac{2nq+\tau}{n}, $$
and
$$ \left|\frac{(\delta_{p})_{ij}-(\bar{\delta}_p)_{ij}}{(\delta_{p})_{ij}\sqrt{(\bar{\delta}_p)_{ij}}+(\bar{\delta}_p)_{ij}\sqrt{(\delta_{p})_{ij}}}\right|\leq \frac{\left|(\delta_{p})_{ij}-(\bar{\delta}_p)_{ij}\right|}{2(L(\alpha)nq+\tau)^3}.
    $$
   
    By Lemma \ref{lem:lemma2.2}, for any \( q \in [1/n, 1/2] \) and \( r \geq 1 \), with probability at least \( 1-Le^{-L\cdot(np)^\alpha} - nq e^{-r} \),

\[
\sup_{p \in \left[q, 2q\right]} \left\|d_p - \mathbb{E}d_p\right\|_2 \leq Ln\sqrt{q}r,
\]

and

\[
\sup_{p \in [q, 2q]} \sum_{i=1}^{n}((d_p)_i + \tau)^2 \leq Lr^2n(2nq + \tau)^2.
\]

     Thus we can easily obtain
     $$
     \begin{aligned}
     \sup_{p\in[q,2q]}\left\|T_p\right\|&\leq L\frac{r^2}{\sqrt{L(\alpha)nq+\tau}}\left(\frac{2nq+\tau}{L(\alpha)nq+\tau}\right)^{\frac{5}{2}}\\
     &\leq L(\alpha)\frac{r^2}{\sqrt{L(\alpha)nq+\tau}}
    \end{aligned}
    $$
    with probability at least $1-Le^{-L\cdot(np)^\alpha}-nqe^{-r}$.

    Let $r$ be $r(nq)^\beta$, then we have
    $$
    \sup_{p\in[q,2q]}\left\|T_p\right\|\leq L(\alpha)\frac{r^2}{\sqrt{L(\alpha)(nq)^{1-4\beta}+\frac{\tau}{(nq)^{4\beta}}}}
    $$
    with probability at least $1-Le^{-L\cdot(np)^\alpha}-nqe^{-(nq)^\beta r}\geq 1-Le^{-L\cdot(np)^\alpha}-L(\beta)e^{-(nq)^\beta r}$.

    Combining (\ref{lem 3.3 event}) and the control over $S_p$ and $T_p$, with probability at least $1-L(\beta)e^{-(nq)^\beta r}-Le^{-L\cdot(np)^\alpha}-Le^{-nqr^2}\geq 1-L(\beta)e^{-(nq)^\beta r}-Le^{-L\cdot(np)^\alpha}$, one has
    $$
    \sup_{p\in [q,2q]}\left\|\mathcal{L}(A_{p_\tau})-\mathcal{L}(\mathbb{E}(A_{p_\tau}))\right\|\leq L(\alpha)\frac{r^2}{\sqrt{L(\alpha)(nq)^{1-4\beta}+\frac{\tau}{nq}}}.
    $$
\end{proof}

\begin{theorem}\label{for kait}
        For any $0<\alpha<1/2$, $0<\beta<1/4$, $\tau\geq 0$, $r\geq 1$ and any $p_0\in [\frac{(\log n)^{\frac{1}{\alpha}}}{n},1]$, with probability at least $1-L(\alpha)e^{-L\cdot(np)^\alpha}-L(\beta)e^{-(nq)^\beta r}$ one has
    $$
    \begin{aligned}
        \sup_{p\in [p_0,1]}\left\|\mathcal{L}(A_{p_\tau})-\mathcal{L}(\mathbb{E}(A_{p_\tau}))\right\|\leq L(\alpha,\beta)\frac{r^2}{\sqrt{(np)^{1-4\beta}+\frac{\tau}{np}}}.
    \end{aligned}
    $$
In particular, if we set $\tau=0$, we have
    $$
    \begin{aligned}
        \sup_{p\in [p_0,1]}\left\|\mathcal{L}(A_{p})-\mathcal{L}(\mathbb{E}A_{p})\right\|\leq L(\alpha,\beta)\frac{r^2}{\sqrt{(np)^{1-4\beta}}}.
    \end{aligned}
    $$
    with probability at least $1-L(\alpha)e^{-L\cdot(np)^\alpha}-L(\beta)e^{-(nq)^\beta r}$.

\end{theorem}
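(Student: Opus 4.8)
The plan is to reduce the supremum over the full range $[p_0,1]$ to the single-scale estimate of Lemma \ref{Concentration' of the spectral norm of Laplacian matrix} by a peeling argument over dyadic scales, exactly paralleling the derivation of Theorem \ref{them 1} from Lemma \ref{Concentration of the spectral norm of Laplacian matrix}. First I would dispose of the trivial case $p_0\geq 1/2$, where $[p_0,1]$ lies in a single dyadic block and the conclusion is immediate. For $p_0<1/2$, let $k_0$ be the largest integer with $2^{k_0}p_0\leq 1$ and cover $[p_0,1]\subseteq\bigcup_{i=0}^{k_0}[q_i,\min\{2q_i,1\}]$ with $q_i=2^i p_0$, so that Lemma \ref{Concentration' of the spectral norm of Laplacian matrix} applies verbatim on each block $[q_i,2q_i]$.

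The second step is to check that the per-block bound has the right shape. For $p\in[q_i,2q_i]$ one has $nq_i\leq np\leq 2nq_i$, so $(nq_i)^{1-4\beta}$ and $\tau/(nq_i)$ differ from $(np)^{1-4\beta}$ and $\tau/(np)$ only by constants depending on $\beta$; hence the block estimate $L(\alpha)r^2/\sqrt{L(\alpha)(nq_i)^{1-4\beta}+\tau/(nq_i)}$ is, up to a factor $L(\alpha,\beta)$, the claimed uniform bound $L(\alpha,\beta)r^2/\sqrt{(np)^{1-4\beta}+\tau/(np)}$. Specializing to $\tau=0$ gives the second displayed inequality with no further work.

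The third step is the union bound together with the geometric summation of the failure probabilities. By Lemma \ref{Concentration' of the spectral norm of Laplacian matrix} the $i$-th block fails with probability at most $Le^{-L(nq_i)^\alpha}+L(\beta)e^{-(nq_i)^\beta r}$, and since $nq_i=2^i np_0$ both exponents grow with $i$. I would then invoke the elementary fact that for $m=np_0$ and any fixed $\gamma\in\{\alpha,\beta\}$ one has $\sum_{i\geq0}e^{-c(2^i m)^\gamma}\leq C(\gamma)e^{-c m^\gamma}$, valid because the cutoff $p_0\geq(\log n)^{1/\alpha}/n$ forces $np_0\geq(\log n)^{1/\alpha}$ to be large; summing therefore collapses to the $i=0$ terms and yields the stated failure probability $L(\alpha)e^{-L(np_0)^\alpha}+L(\beta)e^{-(np_0)^\beta r}$.

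I expect the only genuinely technical point to be this geometric collapse of the tail sum: one must verify that the constants $L(\alpha),L(\beta)$ absorb the length-$(k_0{+}1)$ sums uniformly in $n$, which hinges precisely on the lower cutoff $p_0\geq(\log n)^{1/\alpha}/n$ making the base scale $np_0$ large enough that the super-exponential growth of $(2^i np_0)^\gamma$ dominates. Everything else is bookkeeping inherited from the single-scale lemma, so the argument is short once that summation estimate is recorded.
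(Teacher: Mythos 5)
Your proposal is correct and follows essentially the same route as the paper, whose proof of this theorem simply invokes Lemma \ref{Concentration' of the spectral norm of Laplacian matrix} together with the dyadic-peeling-plus-union-bound strategy of Theorem \ref{them 1}; your write-up merely makes explicit the comparability of $(nq_i)^{1-4\beta}+\tau/(nq_i)$ with $(np)^{1-4\beta}+\tau/(np)$ on each block and the geometric collapse of the tail sum, both of which the paper leaves implicit.
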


\begin{proof}
    By applying Lemma \ref{Concentration' of the spectral norm of Laplacian matrix} and using the proof strategy of Theorem \ref{them 1}, the result follows immediately.
\end{proof}

\subsection{Uniform concentration of Laplacian matrix norm around 1}\label{suec3.3}\

In this subsection, we combine the methods developed in the previous sections with eigenvector properties to bound the norm of Laplacian matrix. We begin by introducing the eigenvector constructed from vertex degrees, and the following result can be verified through straightforward calculations.
\begin{proposition}
    Let $v_0(\mathcal{L}(A_p))$ be a vector such that $v_0(\mathcal{L}(A_p))(i)=\sqrt{d_i}/\sqrt{\sum_{j}d_j}$, then $\mathcal{L}(A_p)v_0(\mathcal{L}(A_p))=0$.
\end{proposition}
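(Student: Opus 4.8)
The plan is to recognize that $v_0$ is, up to a scalar, the image of the all-ones vector $\bar{1}$ under $D^{1/2}$, and then to exploit the defining property of the degree sequence: the row sums of $A_p$ are exactly the degrees. Writing $S = \sum_j d_j$ and $D = \operatorname{diag}(d_i)$, I would first observe that the vector with entries $\sqrt{d_i}$ is precisely $D^{1/2}\bar{1}$, so that $v_0 = D^{1/2}\bar{1}/\sqrt{S}$. This factorization is the whole point of the argument, since it lets the half-powers of $D$ telescope against those appearing in the normalized Laplacian.

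Next I would substitute this into $\mathcal{L}(A_p) = I - D^{-1/2} A_p D^{-1/2}$ and simplify the product $D^{-1/2} A_p D^{-1/2} v_0$. The inner factors collapse via $D^{-1/2} D^{1/2} = I$, leaving $D^{-1/2} A_p \bar{1}/\sqrt{S}$. The key identity is $A_p \bar{1} = D\bar{1}$, which holds because the $i$-th entry of $A_p \bar{1}$ equals $\sum_j (A_p)_{ij} = d_i$, i.e. the $i$-th diagonal entry of $D$. Plugging this in gives $D^{-1/2} A_p \bar{1}/\sqrt{S} = D^{-1/2} D \bar{1}/\sqrt{S} = D^{1/2}\bar{1}/\sqrt{S} = v_0$, so that $D^{-1/2} A_p D^{-1/2} v_0 = v_0$ and therefore $\mathcal{L}(A_p) v_0 = v_0 - v_0 = 0$. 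As a byproduct this exhibits $v_0$ as a unit-norm eigenvector of $D^{-1/2} A_p D^{-1/2}$ with eigenvalue $1$, confirming that $0$ is the smallest eigenvalue of $\mathcal{L}(A_p)$.

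There is no genuine obstacle here; the computation is completely elementary once one spots the factorization $v_0 = D^{1/2}\bar{1}/\sqrt{S}$. The only point deserving a word of care is that $D^{-1/2}$ must be well-defined, i.e. every degree $d_i$ is strictly positive (no isolated vertices); this is implicit in the definition of the normalized Laplacian and holds on the relevant high-probability event in the density regimes $p \geq (\log n)^{1/\alpha}/n$ considered throughout this section.
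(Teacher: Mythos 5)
Your computation is correct and is precisely the ``straightforward calculation'' the paper alludes to without writing out: the factorization $v_0 = D^{1/2}\bar{1}/\sqrt{S}$ together with $A_p\bar{1} = D\bar{1}$ is the intended argument. The only cosmetic overreach is the closing remark that this ``confirms $0$ is the smallest eigenvalue,'' which additionally requires positive semidefiniteness of $\mathcal{L}(A_p)$, but that is standard and not part of the claim being proved.
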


An important property of eigenvectors, known as delocalization, requires that the normalized eigenvector corresponding to the largest eigenvalue is, in some sense, close to the vector \( (1/\sqrt{n}, \ldots, 1/\sqrt{n}) \). As discussed in \cite{LM20BER}, \cite{M09}, \cite{rv15}, and \cite{tvw13}, such delocalization is also used to study the Laplacian spectral gap \cite{ers17}, \cite{rv16}. In contrast to the delocalization of the eigenvector corresponding to the largest eigenvalue of the adjacency matrix, it is straightforward to observe that this paper considers the case of the smallest eigenvalue, which is simplified compared to the largest case. To recall, the properties of the Laplacian matrix ensure that the eigenvector corresponding to the zero eigenvalue is the all-one vector. Therefore, in the normalized setting, what plays a role is the degree of the vertices.

\begin{lemma}\label{control eigenvector}
For any $q\in [\frac{(\log n)^\frac{1}{\alpha}}{n},1/2]$
    $$
    \sup_{p\in [q,2q]} \left\|v_0(\mathcal{L}(A_p))-\frac{\bar{1}}{\sqrt{n}}\right\|_2\leq L(\alpha)/(nq)^{\frac{1}{2}-\alpha}
    $$
    with probability at least $1-Le^{-L\cdot(nq)^\alpha}$.
\end{lemma}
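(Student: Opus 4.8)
The plan is to work coordinatewise and sum. Writing $S=\sum_{j=1}^n d_j$ and $\mu=S/n$ for the empirical mean degree, each coordinate of the difference factors as
\[
v_0(\mathcal{L}(A_p))(i)-\frac{1}{\sqrt n}=\frac{\sqrt{d_i}}{\sqrt S}-\frac{1}{\sqrt n}=\frac{1}{\sqrt n}\left(\sqrt{\frac{d_i}{\mu}}-1\right),
\]
so that $\big\|v_0(\mathcal{L}(A_p))-\bar 1/\sqrt n\big\|_2^2=\frac1n\sum_{i=1}^n(\sqrt{1+x_i}-1)^2$ with $x_i:=(d_i-\mu)/\mu$. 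The decisive step, and the reason the target exponent is $\tfrac12-\alpha$ rather than half of it, is to use the \emph{linear} bound $|\sqrt{1+x}-1|=\tfrac{|x|}{\sqrt{1+x}+1}\le |x|$ (valid for all $x\ge-1$, and $x_i\ge-1$ automatically since $d_i\ge 0$) instead of the crude $|\sqrt a-\sqrt b|\le\sqrt{|a-b|}$; the latter collapses the estimate to only $(nq)^{-(1/2-\alpha)/2}$. This reduces the problem to lower-bounding $\mu$ and controlling $\sum_i(d_i-\mu)^2$.

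Both ingredients come from the uniform degree concentration in Lemma~\ref{Concentration of degrees}. On its event, of probability at least $1-Le^{-L(nq)^\alpha}$, we have $\max_i|d_i-\bar d|\le L(nq)^{\alpha+1/2}$ uniformly over $p\in[q,2q]$, where $\bar d=(n-1)p$ is the common expected degree in the homogeneous model. Averaging over $i$ gives $|\mu-\bar d|\le L(nq)^{\alpha+1/2}$ as well. Since $nq\ge(\log n)^{1/\alpha}\to\infty$ and $\alpha+\tfrac12<1$, the perturbation $L(nq)^{\alpha+1/2}$ is of strictly smaller order than $\bar d\asymp nq$, so for $n$ large $\mu\ge c\,nq$ and $|d_i-\mu|\le 2L(nq)^{\alpha+1/2}$ for every $i$.

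Combining these estimates yields, on the same event,
\[
\left\|v_0(\mathcal{L}(A_p))-\frac{\bar1}{\sqrt n}\right\|_2^2\le\frac1n\sum_{i=1}^n x_i^2\le\frac{1}{n\mu^2}\sum_{i=1}^n(d_i-\mu)^2\le\frac{n\big(2L(nq)^{\alpha+1/2}\big)^2}{n\,c^2(nq)^2}=L(\alpha)\,(nq)^{2\alpha-1},
\]
where I bounded $\sum_i(d_i-\mu)^2\le n\max_i(d_i-\mu)^2$ using the event of Lemma~\ref{Concentration of degrees} (one may equivalently invoke the sharper $\ell_2$ bound of Lemma~\ref{lem:lemma2.2}, which gives the same order). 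Taking square roots gives $\big\|v_0(\mathcal{L}(A_p))-\bar1/\sqrt n\big\|_2\le L(\alpha)/(nq)^{1/2-\alpha}$, and since the supremum over $p\in[q,2q]$ is already built into Lemma~\ref{Concentration of degrees}, passing to $\sup_{p\in[q,2q]}$ costs nothing beyond the stated probability. The only genuinely delicate point is the first paragraph's insistence on the first-order expansion of $\sqrt{1+x}$; the remaining work is routine bookkeeping, but replacing that step with the square-root inequality would halve the exponent and miss the claimed rate.
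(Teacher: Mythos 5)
Your proposal is correct and follows essentially the same route as the paper: both rest on the uniform degree concentration of Lemma~\ref{Concentration of degrees} and on the first-order bound $|\sqrt{1+x}-1|\le|x|$ (the paper applies it coordinatewise to $\sqrt{(np+L(np)^{\alpha+1/2})/(np-L(np)^{\alpha+1/2})}-1$, you apply it after centering at the empirical mean $\mu$, which is only a cosmetic difference). Your closing observation about why the cruder inequality $|\sqrt a-\sqrt b|\le\sqrt{|a-b|}$ would halve the exponent is accurate and matches the reason the paper linearizes here rather than reusing the estimate from Theorem~\ref{chth51lvdiowbe}.
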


\begin{proof}
    $$
    \sup_{p\in [q,2q]} \left\|D_p-\mathbb{E}D_p\right\|\leq L\cdot(nq)^{\alpha+1/2}
    $$
    implies
    $$
    \left|(d_p)_i-np\right|\leq L\cdot(np)^{\frac{1}{2}+\alpha}
    $$
    for any $p$ in $[q, 2q]$ and any $i$.
    Then for any $i$,
    $$
    \begin{aligned}
        v_0(\mathcal{L}(A_p))_i-1/\sqrt{n}&=\sqrt{d_i}/\sqrt{\sum_{j}d_j}-1/\sqrt{n}\\&\leq \sqrt{\frac{np+L\cdot(np)^{\alpha+1/2}}{n(np-L\cdot(np)^{\alpha+1/2})}}-1/\sqrt{n}\\
        &=\left(\sqrt{1+\frac{L\cdot(np)^{\alpha+1/2}}{np-L\cdot(np)^{\alpha+1/2}}}-1\right)/\sqrt{n}\\
        &\leq \frac{L\cdot(np)^{\alpha+1/2}}{np-L\cdot(np)^{\alpha+1/2}}/\sqrt{n}\\&\leq \frac{L(\alpha)}{\sqrt{n}(np)^{\frac{1}{2}-\alpha}}.
    \end{aligned}
    $$
    Similarly,
    $$
    v_0(\mathcal{L}(A_p))_i-1/\sqrt{n}\geq \frac{L(\alpha)}{\sqrt{n}(np)^{\frac{1}{2}-\alpha}}.
    $$
    Combining the inequalities above,
    $$
    \left|v_0(\mathcal{L}(A_p))_i-1/\sqrt{n}\right|\leq \frac{L(\alpha)}{\sqrt{n}(np)^{\frac{1}{2}-\alpha}}
    $$
    for all $p\in [q,2q]$ and $i$.
    Thus
    $$
    \sup_{p\in [q,2q]} \left\|v_0(\mathcal{L}(A_p))-\frac{\bar{1}}{\sqrt{n}}\right\|_2\leq L(\alpha)/(nq)^{\frac{1}{2}-\alpha}.
    $$
    Combining with Lemma \ref{Concentration of degrees}, we arrive at the conclusion.
\end{proof}

\begin{lemma}\label{contorl top eigenvector}
    Let $v_1(\mathcal{L}(A_p))$ be top eigenvector of $\mathcal{L}(A_p)$ such that \( \|v_1(\mathcal{L}(A_p))\|_2 = 1 \). For any $\frac{(\log n)^\frac{1}{\alpha}}{n}\leq p_0\leq 1$ one has
    $$
    \sup_{p\in [p_0,1]}\left|\langle v_1(\mathcal{L}(A_p)), \frac{\bar{1}}{\sqrt{n}} \rangle\right|\leq L(\alpha)/(np)^{\frac{1}{2}-\alpha}
    $$
    with probability at least $1-L(\alpha)e^{-L(\alpha)(np_0)^\alpha}$.
\end{lemma}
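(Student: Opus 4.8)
The plan is to exploit the orthogonality of eigenvectors of the symmetric matrix $\mathcal{L}(A_p)$ together with the already-established Lemma \ref{control eigenvector}. The decisive observation is that $v_1(\mathcal{L}(A_p))$ is the eigenvector of the \emph{largest} eigenvalue, whereas $v_0(\mathcal{L}(A_p))$ is the eigenvector of the eigenvalue $0$, i.e.\ the \emph{smallest} eigenvalue of the positive semidefinite normalized Laplacian. For $p$ in the prescribed range the graph is connected with high probability, so $0$ is a simple eigenvalue and the largest eigenvalue is strictly positive; hence $v_1$ and $v_0$ lie in distinct eigenspaces of a symmetric matrix and are orthogonal, $\big\langle v_1(\mathcal{L}(A_p)), v_0(\mathcal{L}(A_p))\big\rangle = 0$. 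This is exactly the simplification afforded by working with the smallest eigenvalue: its eigenvector is known explicitly.

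First I would fix a dyadic block $[q,2q]$ with $q \geq (\log n)^{1/\alpha}/n$ and work on the event of Lemma \ref{control eigenvector}, on which $\sup_{p\in[q,2q]}\big\|v_0(\mathcal{L}(A_p)) - \bar{1}/\sqrt{n}\big\|_2 \leq L(\alpha)/(nq)^{1/2-\alpha}$. Using the orthogonality I rewrite the target inner product as
\[
\Big\langle v_1(\mathcal{L}(A_p)), \tfrac{\bar{1}}{\sqrt{n}}\Big\rangle = \Big\langle v_1(\mathcal{L}(A_p)), \tfrac{\bar{1}}{\sqrt{n}} - v_0(\mathcal{L}(A_p))\Big\rangle + \big\langle v_1(\mathcal{L}(A_p)), v_0(\mathcal{L}(A_p))\big\rangle = \Big\langle v_1(\mathcal{L}(A_p)), \tfrac{\bar{1}}{\sqrt{n}} - v_0(\mathcal{L}(A_p))\Big\rangle,
\]
and then apply Cauchy--Schwarz together with $\|v_1(\mathcal{L}(A_p))\|_2 = 1$:
\[
\Big| \Big\langle v_1(\mathcal{L}(A_p)), \tfrac{\bar{1}}{\sqrt{n}}\Big\rangle \Big| \leq \Big\| \tfrac{\bar{1}}{\sqrt{n}} - v_0(\mathcal{L}(A_p))\Big\|_2 .
\]
Taking the supremum over $p \in [q,2q]$ and invoking Lemma \ref{control eigenvector} yields the block estimate $L(\alpha)/(nq)^{1/2-\alpha}$, valid with probability at least $1 - Le^{-L(nq)^\alpha}$.

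To pass from a single dyadic block to the full range $[p_0,1]$ I would reuse the decomposition strategy of Theorem \ref{them 1}: let $k_0$ be the largest integer with $2^{k_0}p_0 \leq 1$, cover $[p_0,1]$ by the blocks $[2^i p_0, \min\{2^{i+1}p_0,1\}]$ for $i=0,\dots,k_0$, apply the block estimate on each, and sum the failure probabilities. Since the block $[2^i p_0, 2^{i+1}p_0]$ contributes a failure probability of order $e^{-L(n2^i p_0)^\alpha}$ and the bound $L(\alpha)/(np)^{1/2-\alpha}$ is monotone in $p$, the geometric series $\sum_{i\ge 0} e^{-L(n2^i p_0)^\alpha}$ is dominated by its first term, giving the stated probability $1 - L(\alpha)e^{-L(\alpha)(np_0)^\alpha}$.

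The Cauchy--Schwarz estimate and the dyadic union bound are routine; the only point deserving care is the justification of $\langle v_1, v_0\rangle = 0$, which rests on $0$ being the smallest eigenvalue of $\mathcal{L}(A_p)$ and strictly separated from the largest one. Thus the main (and rather mild) obstacle is ensuring, uniformly over $p\in[p_0,1]$, that the graph remains connected so that $\lambda_{\max}(\mathcal{L}(A_p)) \neq 0$ and $v_1$ genuinely sits in an eigenspace orthogonal to the null vector $v_0$; note that the multiplicity of $\lambda_{\max}$ is irrelevant here, since any eigenvector of a nonzero eigenvalue is automatically orthogonal to the $0$-eigenspace. This connectivity holds for $p \ge (\log n)^{1/\alpha}/n$ and its failure probability is of the same exponential order as the error terms already present, so it is absorbed without affecting the final bound.
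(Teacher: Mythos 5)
Your proposal is correct and follows essentially the same route as the paper's proof: both use the explicit form of the null eigenvector $v_0(\mathcal{L}(A_p))$, its orthogonality to $v_1(\mathcal{L}(A_p))$, the Cauchy--Schwarz bound $\lvert\langle v_1,\bar{1}/\sqrt{n}\rangle\rvert\leq\lVert v_0-\bar{1}/\sqrt{n}\rVert_2$, and then Lemma \ref{control eigenvector} combined with the dyadic covering argument of Theorem \ref{them 1} to pass from $[q,2q]$ to $[p_0,1]$. Your additional remarks justifying the orthogonality (largest eigenvalue nonzero) only make explicit what the paper leaves implicit.
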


\begin{proof}
    we first prove
    \begin{equation}\label{arrvat}
    \sup_{p\in [p_0,1]} \left\|v_0(\mathcal{L}(A_p))-\frac{\bar{1}}{\sqrt{n}}\right\|_2\leq L(\alpha)/(np)^{\frac{1}{2}-\alpha}.
    \end{equation}
    with probability at least $1-L(\alpha)e^{-L(\alpha)(np_0)^\alpha}$. By employing Lemma \ref{control eigenvector}, along with Theorem \ref{them 1} and some straightforward calculations, we can arrive at (\ref{arrvat}).

    Note that the eigenvectors are orthogonal to each other, thus $$\langle v_0(\mathcal{L}(A_p)),v_1(\mathcal{L}(A_p))\rangle = 0.$$ 

   Then
    $$
    \begin{aligned}
    \left|\langle v_1(\mathcal{L}(A_p)), \frac{\bar{1}}{\sqrt{n}} \rangle\right| &=  \left|\langle v_1(\mathcal{L}(A_p)),v_0(\mathcal{L}(A_p))-\frac{\bar{1}}{\sqrt{n}} \rangle\right|  \\
    &\leq \left\|v_0(\mathcal{L}(A_p))-\frac{\bar{1}}{\sqrt{n}}\right\|_2.
    \end{aligned} 
    $$

     Note that \( \|v_1(\mathcal{L}(A_p))\|_2 = 1 \). Thus, we obtain that
    $$
    \begin{aligned}
    &\sup_{p\in [p_0,1]} \left\|v_0(\mathcal{L}(A_p))-\frac{\bar{1}}{\sqrt{n}}\right\|_2\leq L(\alpha)/(np)^{\frac{1}{2}-\alpha}.
    \end{aligned}
    $$
    with probability at least $1-L(\alpha)e^{-L(\alpha)(np_0)^\alpha}$.
\end{proof}

Now we are ready to present the main result of this subsection.
\begin{theorem}\label{strange concentration}
    For any $0<\alpha<1/2$ and $\frac{(\log n)^\frac{1}{\alpha}}{n}\leq p_0\leq 1$,
    $$
    \mathbb{P}\left\{\sup_{p\in [p_0,1]}\left|\left\|\mathcal{L}(A_p)\right\|-1\right|\leq \frac{L(\alpha)}{(np)^{\min\{\frac{1}{2},1-2\alpha\}}}\right\}\geq 1-L(\alpha)e^{-L(\alpha)(np_0)^\alpha}.
    $$
\end{theorem}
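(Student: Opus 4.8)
The plan is to realize $\|\mathcal{L}(A_p)\|$ as a quadratic form in its own top eigenvector and compare it against the deterministic matrix $\mathcal{L}(\mathbb{E}A_p)$, whose spectrum can be written down explicitly. First I would record the elementary computation that in the homogeneous case $\mathbb{E}A_p = p(\bar 1\bar 1^\top - I)$ and $\bar{D}_p = (n-1)p\,I$, so that
\[
\mathcal{L}(\mathbb{E}A_p) = \frac{n}{n-1}\left(I - \frac{1}{n}\bar 1\bar 1^\top\right),
\]
which has eigenvalue $0$ on $\operatorname{span}(\bar 1)$ and eigenvalue $n/(n-1)$ on $\bar 1^{\perp}$; in particular $\|\mathcal{L}(\mathbb{E}A_p)\| = n/(n-1) = 1 + O(1/n)$. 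Since the normalized Laplacian is positive semidefinite, its eigenvalues lie in $[0,2]$ and so $\|\mathcal{L}(A_p)\| = \lambda_{\max}(\mathcal{L}(A_p)) = \langle v_1, \mathcal{L}(A_p)v_1\rangle$, where $v_1 = v_1(\mathcal{L}(A_p))$ is the unit top eigenvector controlled by Lemma \ref{contorl top eigenvector}.

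Next I would split this quadratic form as
\[
\langle v_1, \mathcal{L}(A_p)v_1\rangle = \langle v_1, \mathcal{L}(\mathbb{E}A_p)v_1\rangle + \langle v_1, \left(\mathcal{L}(A_p) - \mathcal{L}(\mathbb{E}A_p)\right)v_1\rangle,
\]
and treat the two pieces with the two different tools assembled in this section. For the deterministic piece, the explicit form above gives $\langle v_1, \mathcal{L}(\mathbb{E}A_p)v_1\rangle = \frac{n}{n-1}\bigl(1 - \langle v_1, \bar 1/\sqrt n\rangle^2\bigr)$, so invoking the eigenvector delocalization of Lemma \ref{contorl top eigenvector}, namely $|\langle v_1, \bar 1/\sqrt n\rangle| \le L(\alpha)/(np)^{1/2-\alpha}$, shows that this term equals $1$ up to an error of order $(np)^{-(1-2\alpha)}$ (plus the harmless $1/(n-1)$). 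For the fluctuation piece I would bound it crudely by the operator norm $\|\mathcal{L}(A_p) - \mathcal{L}(\mathbb{E}A_p)\|$ and feed in the uniform concentration estimate of Theorem \ref{for kait} (with $\tau = 0$), which controls it at rate $(np)^{-1/2}$ up to the auxiliary exponent $\beta$. Adding the two contributions and taking the worse of the two exponents produces the claimed bound of order $(np)^{-\min\{1/2,\,1-2\alpha\}}$, uniformly over $p \in [p_0,1]$ since both input estimates are already uniform on that range, so no further dyadic union bound is needed. Because the identity $\|\mathcal{L}(A_p)\| = \langle v_1, \mathcal{L}(A_p)v_1\rangle$ is exact, the same computation delivers both the upper and the lower bound on $\bigl|\,\|\mathcal{L}(A_p)\| - 1\,\bigr|$ at once.

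The main obstacle I anticipate is bookkeeping the probabilities and the free exponent so that the final statement holds at the advertised confidence $1 - L(\alpha)e^{-L(\alpha)(np_0)^\alpha}$. The delocalization step already holds at exactly this level, so the work is to invoke the concentration input (Theorem \ref{for kait}, or equivalently Theorem \ref{them 1} after choosing $\tau$) with its parameter $\beta$ and deviation level $r$ tuned so that its failure probability is dominated by $L(\alpha)e^{-L(\alpha)(np_0)^\alpha}$ while its rate does not exceed $(np)^{-\min\{1/2,\,1-2\alpha\}}$; it is precisely the interplay between $\alpha$, $\beta$, and the required tail that forces some care. A minor check to dispatch along the way is that the $1/(n-1)$ and lower-order terms are genuinely dominated by $(np)^{-\min\{1/2,\,1-2\alpha\}}$ under the hypothesis $np_0 \ge (\log n)^{1/\alpha}$, which follows since $p \le 1$ and $\alpha > 0$.
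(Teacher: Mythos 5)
Your overall architecture --- writing $\|\mathcal{L}(A_p)\|$ as the quadratic form at the top eigenvector $v_1$, using the delocalization of $v_1$ from Lemma \ref{contorl top eigenvector} to kill the rank-one part of the mean, and isolating a fluctuation term --- is exactly the paper's strategy, and your treatment of the deterministic piece is fine. The genuine gap is in the fluctuation term. You bound it by the full Laplacian deviation $\|\mathcal{L}(A_p)-\mathcal{L}(\mathbb{E}A_p)\|$ and invoke Theorem \ref{for kait} with $\tau=0$, which gives rate $r^2(np)^{-(1-4\beta)/2}=r^2(np)^{-1/2+2\beta}$ with failure probability containing $e^{-(np_0)^\beta r}$. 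To make that tail dominated by $e^{-L(\alpha)(np_0)^\alpha}$ you are forced to take either $\beta\geq\alpha$ or $r\gtrsim (np_0)^{\alpha-\beta}$; in either case the resulting bound is of order $(np)^{-(1/2-2\alpha)}$, and since $1/2-2\alpha<\min\{1/2,\,1-2\alpha\}$ for every $\alpha\in(0,1/2)$ (indeed the exponent is nonpositive once $\alpha\geq 1/4$), this route cannot reach the advertised exponent. The tension you flag at the end is not a bookkeeping nuisance but an obstruction.

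The paper avoids it by never passing through $\|\mathcal{L}(A_p)-\mathcal{L}(\mathbb{E}A_p)\|$. It uses the identity $x^{T}\mathcal{L}(A_p)x=1-x^{T}D_p^{-1/2}A_pD_p^{-1/2}x$ and splits $A_p=(A_p-\mathbb{E}A_p)+\mathbb{E}A_p$ \emph{inside} the quadratic form, keeping the random $D_p^{-1/2}$ on both sides throughout. The random part is then bounded by $\|D_p^{-1/2}\|^{2}\,\|A_p-\mathbb{E}A_p\|\leq \frac{L(\alpha)}{np}\cdot L\sqrt{np}=\frac{L(\alpha)}{\sqrt{np}}$, using only the adjacency concentration (Lemma \ref{lem:concentration}) and the degree concentration (Lemma \ref{Concentration of degrees}) --- no $\beta$-loss at all; the $\mathbb{E}A_p$ part exploits $\mathbb{E}A_p=np\,\frac{\bar 1}{\sqrt n}\frac{\bar 1}{\sqrt n}^{T}-pI$ together with the estimate $\left|\langle D_p^{-1/2}x,\frac{\bar 1}{\sqrt n}\rangle\right|\leq L(\alpha)/(np)^{1-\alpha}$ for $x$ in the delocalized set, yielding $(np)^{-(1-2\alpha)}$. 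To repair your argument, replace the appeal to Theorem \ref{for kait} with this direct $\|D_p^{-1/2}\|^{2}\,\|A_p-\mathbb{E}A_p\|$ bound; the rest of your outline then goes through.
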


\begin{proof}
    We define the events
    $$
    \begin{aligned}
    &E_1=\left\{\sup_{p\in [p_0,1]} \left\|D_p-\mathbb{E}D_p\right\|\leq L(\alpha)\cdot(np)^{\alpha+1/2}\right\};\\
    &E_2=\left\{\sup_{p\in [p_0,1]}\left|\langle v_1(\mathcal{L}(A_p)), \frac{\bar{1}}{\sqrt{n}} \rangle\right|\leq L(\alpha)/(np)^{\frac{1}{2}-\alpha}\right\};\\
    &E_3=\left\{\sup _{p \in[p_0, 1]}\left\|A_p-\mathbb{E} A_p\right\|\leq L\sqrt{n p}\right\}.
     \end{aligned}
    $$
    Combing Lemma \ref{Concentration of degrees} and simple  calculations, we can obtain $$\mathbb{P}\left\{E_1\right\}\geq 1-L(\alpha)e^{-L(\alpha)(np_0)^\alpha}.$$  
    Similarly, by Lemma \ref{lem:concentration}, it is easy to get
    $$\mathbb{P}\left\{E_3\right\}\geq 1-Le^{-L\sqrt{np_0}}.$$
    Lemma \ref{contorl top eigenvector} implies $$\mathbb{P}\left\{E_2\right\}\geq 1-L(\alpha)e^{-L(\alpha)(np_0)^\alpha}.$$
  Obviously, 
  $$\mathbb{P}\left\{E_1\bigcap E_2\bigcap E_3\right\}\geq 1-L(\alpha)e^{-L(\alpha)(np_0)^\alpha}.$$ 

     When $E_2$ occurs, $v_1(\mathcal{L}(A_p))\in S_p:=\left\{x\in S^{n-1};\left|\langle x, \frac{\bar{1}}{\sqrt{n}} \rangle\right|\leq L(\alpha)/(np)^{\frac{1}{2}-\alpha} \right\} $ for all $ p\in [p_0,1]$. Thus for all $p\in [p_0,1]$,
     $$
     \begin{aligned}
         \inf_{x\in S_p} \left|x^T\mathcal{L}(A_p)x\right|\leq \left\|\mathcal{L}(A_p)\right\|\leq \sup_{x\in S_p} \left|x^T\mathcal{L}(A_p)x\right| .
     \end{aligned}
     $$
     Note that
     $$
     x^T\mathcal{L}(A_p)x = 1 - x^TD_p^{-\frac{1}{2}}(A_p)D_p^{-\frac{1}{2}}x.
     $$
     We have 
     \begin{equation}\label{Z1}
     \begin{aligned}
     \left|\left\|\mathcal{L}(A_p)\right\|-1\right|&\leq \sup_{x\in S_p}\left|x^TD_p^{-\frac{1}{2}}A_pD_p^{-\frac{1}{2}}x\right|.\\
     &\leq \sup_{x\in S_p}\left|x^TD_p^{-\frac{1}{2}}(A_p-\mathbb{E}A_p)D_p^{-\frac{1}{2}}x\right| + \sup_{x\in S_p}\left|x^TD_p^{-\frac{1}{2}}(\mathbb{E}A_p)D_p^{-\frac{1}{2}}x\right|.
     \end{aligned}   
     \end{equation}

\textbf{Bounding $x^TD_p^{-\frac{1}{2}}(A_p-\mathbb{E}A_p)D_p^{-\frac{1}{2}}x$:}

     When $E_1$ occurs, for all $p\in [p_0,1]$,
     $$
     np-L(\alpha)\cdot(np)^{\alpha+1/2}\leq \left\|D_p\right\|\leq np+L(\alpha)\cdot(np)^{\alpha+1/2}.
     $$
     This implies
     $$
     \left\|D_p^{-\frac{1}{2}}x\right\|_2\leq L/\sqrt{np+L(\alpha)\cdot(np)^{\alpha+1/2}}\leq \frac{L(\alpha)}{\sqrt{np}}.
     $$
     Thus when $E_1\bigcap E_3$ occurs,
     \begin{equation}\label{Z2}
     \sup_{x\in S_p}\left|x^TD_p^{-\frac{1}{2}}(A_p-\mathbb{E}A_p)D_p^{-\frac{1}{2}}x\right|\leq \sup_{x\in \frac{L(\alpha)}{\sqrt{np}}S^{n-1}}\left|x^T(A_p-\mathbb{E}A_p)x\right|\leq \frac{L(\alpha)}{\sqrt{np}}.    
     \end{equation}

     \textbf{Bounding $x^TD_p^{-\frac{1}{2}}(\mathbb{E}A_p)D_p^{-\frac{1}{2}}x$:}
     
     For all $x\in S_p$,
     $$
     -L(\alpha)/(np)^{\frac{1}{2}-\alpha}\leq \sum_i\frac{x^{+}_i}{\sqrt{n}}-\sum_j\frac{x^{-}_j}{\sqrt{n}}\leq L(\alpha)/(np)^{\frac{1}{2}-\alpha}.
     $$

     For all $x\in S_p$, simple scaling gives
     $$
     \begin{aligned}
     &\sum_i\frac{x^{+}_i}{\sqrt{nd_i}}-\sum_j\frac{x^{-}_j}{\sqrt{nd_j}}\\
     &\quad\leq \left(L(\alpha)/(np)^{\frac{1}{2}-\alpha}+\sum_j\frac{x^{-}_j}{\sqrt{n}}\right)/\min_i \sqrt{d_i}-\sum_j\frac{x^{-}_j}{\sqrt{nd_j}}\\
     &\quad\leq L(\alpha)/(np)^{1-\alpha}+\sum_j\frac{x_j^-\left(\sqrt{np+L(\alpha)\cdot(np)^{\frac{1}{2}+\alpha}}-\sqrt{np-L(\alpha)\cdot(np)^{\frac{1}{2}+\alpha}}\right)}{\sqrt{n}(np-L(\alpha)(np)^{\alpha+1/2})}\\
     &\quad\leq L(\alpha)/(np)^{1-\alpha}+\frac{\sqrt{np+L(\alpha)\cdot(np)^{\frac{1}{2}+\alpha}}-\sqrt{np-L(\alpha)\cdot(np)^{\frac{1}{2}+\alpha}}}{np-L(\alpha)(np)^{\alpha+1/2}}\\
     &\quad\leq L(\alpha)/(np)^{1-\alpha}+\frac{(np)^{\frac{1}{2}+\alpha}}{L(\alpha)np\sqrt{np}}\\
     &\quad\leq L(\alpha)/(np)^{1-\alpha}.
     \end{aligned}
     $$
     Similarly, one can obtain
     $$
     L(\alpha)/(np)^{1-\alpha}\leq \sum_i\frac{x^{+}_i}{\sqrt{nd_i}}-\sum_j\frac{x^{-}_j}{\sqrt{nd_j}}
     .$$
     It is noteworthy that $L(\alpha)$ may be a negarive number here.
     
     Summarize the inequalities above,
     $$
     \left|\langle D_p^{-\frac{1}{2}}x, \frac{\bar{1}}{\sqrt{n}}\rangle\right|\leq L(\alpha)/(np)^{1-\alpha}.
     $$

     Note that $\mathbb{E}A_p=np\frac{\bar{1}}{\sqrt{n}}\frac{\bar{1}}{\sqrt{n}}^T-pI$,
     \begin{equation}\label{Z3}
     \begin{aligned}
         \sup_{x\in S_p}\left|x^TD_p^{-\frac{1}{2}}(\mathbb{E}A_p)D_p^{-\frac{1}{2}}x\right|&\leq \sup_{x\in S_p}\left|(D_p^{-\frac{1}{2}}x)^Tnp\frac{\bar{1}}{\sqrt{n}}\frac{\bar{1}}{\sqrt{n}}^T(D_p^{-\frac{1}{2}}x)\right|+p\sup_{x\in S_p}\left\|D_p^{-\frac{1}{2}}x\right\|_2^2\\
         &\leq npL(\alpha)/(np)^{2-2\alpha}+L(\alpha)/(np)\\
         &\leq L(\alpha)/(np)^{1-2\alpha}.
     \end{aligned}
      \end{equation}
      
     Combining \eqref{Z1}, \eqref{Z2} and \eqref{Z3}, we have proven that when the event $E_1\bigcap E_2\bigcap E_3$ occurs, 
     $$
     \left|\left\|\mathcal{L}A_p\right\|-1\right|\leq \frac{L(\alpha)}{\sqrt{np}}+\frac{L(\alpha)}{(np)^{1-2\alpha}}\leq \frac{L(\alpha)}{(np)^{\min\{\frac{1}{2},1-2\alpha\}}}.
     $$
     
\end{proof}

\textbf{Acknowledgment} The authors wish to express heartfelt gratitude for Professor Roman Vershynin's invaluable suggestions and unwavering support. The authors would also like to express profound gratitude to Professor Wang Hanchao for his fruitful discussions and guidance.  Chen and Hu(corresponding author) are supported by the Shandong Provincial Natural Science Foundation (No. ZR2024MA082). 


\end{document}